 \newtheorem{thm}{Theorem}[section] \newtheorem{cor}[thm]{Corollary}  \newtheorem{lem}[thm]{Lemma}
 \newtheorem{prop}[thm]{Proposition} \theoremstyle{definition}  \newtheorem{defn}[thm]{Definition}
 \theoremstyle{remark} \newtheorem{rem}[thm]{Remark}   \numberwithin{equation}{section}
\definecolor{mycolor}{rgb}{0.122, 0.835, 0.998}
\newmdenv[innerlinewidth=0.5pt, roundcorner=4pt,linecolor=mycolor,innerleftmargin=6pt,
innerrightmargin=6pt,innertopmargin=6pt,innerbottommargin=6pt]{mybox}
\newcommand{\norm}[1]{\left\Vert#1\right\Vert} \newcommand{\scal}[1]{\left<#1\right>}
\newcommand{\N}{\mathbb{N}}  \newcommand{\R}{\mathbb{R}}  \newcommand{\C}{\mathbb{C}}
   \newcommand{\BC}{\mathbb{T}} 
  \newcommand{\V}{\mathcal{V}} %\newcommand{\Vs}{\mathcal{V}}
  \newcommand{\BargBC}{\mathcal{B}^{\sigma,\nu}_{\BC}}
\newcommand{\bz}{\overline{z}}   
 \newcommand{\eu}{{e_+}}  \newcommand{\es}{{e_-}}
\newcommand{\Mbcnumuo}{\mathcal{M}^{2,\sigma}(\BC^e_{\nu,\mu})}
\newcommand{\Mbcnumun}{\mathcal{M}^{2,\sigma}_n(\BC^e_{\nu,\mu})}
\newcommand{\MbcnumuT}{\mathcal{G}^{2,\sigma}(\BC^e_{\nu,\mu})}
\newcommand{\Mbcnumumn}{\mathcal{M}^{2,\sigma}_{m,n}(\BC^e_{\nu,\mu})}
\begin{document}

\title[On bicomplex  Fourier--Wigner transforms]{On bicomplex  Fourier--Wigner transforms}
\author{A. El Gourari}
\address{ Department of Mathematics, Faculty of Sciences,
	\newline Ibn Tofa\"il University, Kenitra}
\email{aiadelgourari@gmail.com}

\author{A. Ghanmi}
\author{K. Zine}
\address{A.G.S.-L.A.M.A., CeReMAR, Department of Mathematics,
           \newline P.O. Box 1014,  Faculty of Sciences,
           \newline Mohammed V University in Rabat, Morocco}

\email{allal.ghanmi@um5.ac.ma} %allal.ghanmi@um5.ac.ma
\email{zine0khalil@gmail.com}

\subjclass[2010]{Primary 30G35; 44A15; Secondary 32A17}

\keywords{Bicomplex Fourier--Wigner transform; Moyal's formula, $\BC$--Bargmann space; $(n,1,1)$--$\BC$--Bargmann space of first kind; Orthonormal basis}
%The bicomplex analogs of the Fourier-Wigner transform will have important applications in many branches of engineries, since the out functions are two complex signals focused on two directions.
% Fourier-Wigner transform is a basic tool in signal processing as well as in other branches of mathematics and enginery. The generalization to the bicomplex context will give rise to other applications since the out functions are two complex signals focused on two directions.
\begin{abstract}
	We consider the $1$- and $2$-d bicomplex analogs of the classical  Fourier--Wigner transform. Their basic properties, including Moyal's identity and characterization of their ranges giving rise to new  bicomplex--polyanalytic functional spaces are discussed. Particular case of special window is also considered. An orthogonal basis for the space of bicomplex--valued square integrable functions on the bicomplex numbers is constructed by means of the polyanalytic complex Hermite functions.
	\end{abstract}

\maketitle

%%%%%%%%%%%%%%%%%%%%%%%%%%%%%%%%%%%%%%%%%%%%%%%%%%%%%%%%%%%%%%%%%%%%%%%%%%%%%%%%%%%%%%%%%%%%%%%%%%%%%%%%%%%%%%%%%%%%%%%%%%%%%%%%%%%%%%
\section{\texttt{Introduction}} \label{s1}
%%%%%%%%%%%%%%%%%%%%%%%%%%%%%%%%%%%%%%%%%%%%%%%%%%%%%%%%%%%%%%%%%%%%%%%%%%%%%%%%%%%%%%%%%%%%%%%%%%%%%%%%%%%%%%%%%%%%%%%%%%%%%%%%%%%%%%

The standard Fourier--Wigner (windowed Fourier) transform is the well--defined bilinear mapping $\V: (f,g) \longmapsto  \V(f,g)$ on $L^2_\C(\R^d)\times L^2_\C(\R^d)$ given by the cross--Wigner function \cite{Folland1989,Thangavelu1993,Wong1998,Cohen2013,deGosson2017}
\begin{equation}
\V(f,g)(p,q) %&=\left(\frac{1}{2\pi}\right)^{\frac{d}{2}}
%\int_{\R^d}  e^{i \scal{x+\frac{p}{2},q}_{\R^d}} f(x+p) \overline{g(x)} dx. \label{FWT0}\\&
=\left(\frac{1}{2\pi}\right)^{\frac{d}{2}}
\int_{\R^d}  e^{i \scal{x-\frac{p}{2},q}_{\R^d}} f(x) \overline{g(x-p)} dx \label{FWT}
\end{equation}
for every $(p,q)\in \R^d \times \R^d$.
%since
%$$|\V(f,g)(p,q)|  \leq \left(\dfrac{1}{2\pi}\right)^{\frac{d}{2}} \norm{f}_{L^2_{\C}(\R^d)} \norm{g}_{L^2_{\C}(\R^d)}.$$
For fixed window state, it is closely related to Gabor's
%windowed (or short--time) Fourier
transform \cite{Gabor1946}  %Cohen1966,Cohen1989},
as well as to the well--known Segal--Bargmann transform \cite{Folland1989,Thangavelu1993}. It reduces to the familiar Wigner distribution when $f=g$; see e.g \cite{Wong1998,deGosson2017}.
The transform $\V$ has being intensively considered in harmonic analysis \cite{Folland1989,Thangavelu1993} and time--frequency analysis \cite{Cohen1995,Grocheng2001}.
%, engineering and the physical science. \cite{Folland1989,Cohen1995,Thangavelu1998,Grocheng2001} .
In fact, it is very useful in in the study of nonexisting joint probability distribution of positioned momentum in a given state \cite{Wong1998}.
 It is a basic tool for %the study of the Weyl transform \cite{Folland1989,Thangavelu1998,Wong1998} and to
interpreting quantum mechanics as a form of nondeterministic statical dynamics \cite{Moyal1949}.
%It is also widely used in the signal processing \cite{Cohen1966,Cohen1989}

The aim of this paper is to extend this transform to the bicomplex setting, i.e. where $(\R\times \R) \eu + (\R\times \R) \es$ is considered instead of the standard phase (time-frequency) space  $\R\times \R$.
 Although this can be accomplished in a number of different ways,
 %as explained below,
 we shall confine our attention to two main  natural bicomplex Fourier--Wigner transforms  (Sections 3 and 4).
We investigate their basic properties such as the corresponding Moyal's identities (energy preservation principle).
We also characterize their ranges leading to new bicomplex--polyanalytic functional spaces. We also provide a new class of four--indices bivariate complex orthogonal polynomials of Hermite type that form an orthonormal basis of the infinite Hilbert space of bicomplex--valued square integrable functions on bicomplex space (see Section 4).

We will rely mostly on the notations and basic tools as given in \cite{Zine2018} and
 relevant to bicomplex numbers $\BC$, bicomplex holomorphic functions and bicomplex Hilbert spaces,  For further detail, we can refer to  \cite{Price1991,RochonTremmblay2006,Zine2018} and the references therein.

 \section{Preliminaries: The rescaled Fourier--Wigner transform.}
 %%%%%%%%%%%%%%%%%%%%%%%%%%%%%%%%%%%%%%%%%%%%%%%%%%%%%%%%%%%%%%%%%%%%%%%%%%%%%%%%%%%%%%%%%%%%%%%%%%%%%%%%%%%%%%%%%%%%%%%%%%%%%%%%%%%%

  We begin by reviewing the notion and the basic facts related to the rescaled  Fourier--Wigner transform
 \begin{equation}\label{FWTsig}
 \V^\sigma(f,g)(p,q)=\left(\frac{\sigma}{2\pi}\right)^{\frac{d}{2}}
 \int_{\R^d}  e^{i \sigma \scal{x-\frac{p}{2},q}_{\R^d}} f(x) \overline{g(x-p)} dx.
 \end{equation}
 Such transform can be rewritten in terms of the translation operator $T_x$ and the modulation operator $M^\sigma_\xi$ given respectively by
  $T_x g(t) := g(t - x)$ and $M^\sigma_\xi g(t) = e^{i\sigma \xi\cdot t}g(t)$.
  In fact, we have
\begin{align*}
 \V^\sigma(f,g)(p,q)
 &= \left(\frac{\sigma}{2\pi}\right)^{\frac{d}{2}}  e ^ { -i\frac{\sigma}{2}
	\scal{p,q}_{\R^d} } \scal{f,M^\sigma_{-q} T_pg}_{L^2_{\C}(\R^d)}
\end{align*}
where $\scal{p,q}_{\R^d}$ denotes the usual scalar product in $\R^d$ and $L^2_{\C}(\R^d)$ is the space of $\C$--valued square integrable functions with respect to the Lebesgue measure $dx$ on $\R^d$.
Such transform maps $L^2_{\C}(\R^d) \times L^2_{\C}(\R^d)$ into $L^2_{\C}(\C^d)$ (see e.g. \cite{Thangavelu1993,Wong1998}).
An interesting result satisfied by $\V^\sigma$ is the Moyal's formula
\begin{eqnarray}
 \scal{\V^\sigma(f,g),\V^\sigma(\varphi,\psi)}_{L^2_{\C}(\C^d)}
&=& \scal{f,\varphi}_{L^2_{\C}(\R^d)} \scal{\psi,g}_{L^2_{\C}(\R^d)} \label{Moyal}
\end{eqnarray}
 for all $f,g,\varphi,\psi \in L^2_{\C}(\R^d)$.
It readily follows from the classical  Moyal's formula for $\V$ (\cite{Folland1989,Thangavelu1993,Wong1998,deGosson2008}) combined with the fact that $\V^\sigma(f,g)(p,q) = \sigma^{d/2} \V(f,g)(p,\sigma q)$. 
It interprets the fact that $\V^\sigma$ preserves the energy of a signal.
Accordingly, it can be shown \cite{Wong1998,deGosson2008} that the Fourier--Wigner transform $\V^\sigma$
produces orthonormal bases for the Hilbert space $L^2_{\C}(\C^d)$ from the ones of $L^2_{\C}(\R^d)$.
More precisely, if $\{\varphi_{k},k\in\N\}$ is an orthonormal basis of $L^2_{\C}(\R^d)$, then  $\{\varphi_{jk}=\V^\sigma(\varphi_{j},\varphi_{k}); \, j,k=0,1,2,\cdots \}$ is an orthonormal basis of $L^2_{\C}(\C^d)$.
This fact will be used, when dealing with the special bicomplex Fourier--Wigner transform discussed in Section 4, in order to obtain bicomplex four--indices orthogonal polynomials of Hermite type that are not tensor product of the Hermite polynomials on $\R$.

The next result is the analog of Theorem 3.1 in \cite{ABEG2015} for the action of $\V^{\sigma}$ on the rescaled Hermite functions
$$h^\sigma_n(t) = -1)^{n} e^{\frac{\sigma}{2} t^2} \dfrac{d^n}{dt^n} \left(e^{-\sigma t^2}\right)  = \sqrt{\sigma}^n h_{n}(\sqrt{\sigma}t)$$
that form an orthogonal basis of $L^2_{\C}(\R)$. It asserts that $\V^{\sigma}(h^\sigma_m,h^\sigma_n)$ is closely connected to the univariate polyanalytic Hermite function
 \begin{align}\label{chp}
h^\alpha_{m,n}(z,\bz) :=
(-1)^{m+n}e^{\frac{\alpha}{2} |z|^2}\dfrac{\partial ^{m+n}}{\partial \bz^{m} \partial z^{n}} \left(e^{-\alpha |z|^2}\right) , \quad \alpha>0.
\end{align}
We denote  $h_{m,n}=h^1_{m,n}$.

\begin{prop} \label{actionVsig}
	We have
	\begin{eqnarray}
	\V^{\sigma}(h^\sigma_m,h^\sigma_n) (p,q) = (-1)^{n} \frac{2^{m+n}}{\sqrt{2}}   h^{\sigma/2}_{m,n} (z, \bz).
	\end{eqnarray}
\end{prop}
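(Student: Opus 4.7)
The plan is to reduce the identity to the unrescaled case $\sigma=1$, which is precisely Theorem~3.1 of \cite{ABEG2015}, by combining two elementary dilation identities. Starting directly from \eqref{FWTsig} (with $d=1$) one has the scaling relation $\V^{\sigma}(f,g)(p,q)=\sqrt{\sigma}\,\V(f,g)(p,\sigma q)$ already noted in the text. Performing the change of variable $y=\sqrt{\sigma}\,x$ inside $\V(h^\sigma_m,h^\sigma_n)(p,\sigma q)$ and using $h^\sigma_n(t)=\sqrt{\sigma}^{\,n}h_n(\sqrt{\sigma}\,t)$ absorbs a Jacobian $\sqrt{\sigma}^{-1}$ and produces
\[
\V^{\sigma}(h^\sigma_m,h^\sigma_n)(p,q)=\sqrt{\sigma}^{\,m+n}\,\V(h_m,h_n)\bigl(\sqrt{\sigma}\,p,\sqrt{\sigma}\,q\bigr).
\]
Theorem~3.1 of \cite{ABEG2015} then computes $\V(h_m,h_n)(p,q)=(-1)^n\,(2^{m+n}/\sqrt{2})\,h^{1/2}_{m,n}(z,\bz)$, with $z$ the complex variable naturally attached to $(p,q)$, and evaluating at $(\sqrt{\sigma}\,p,\sqrt{\sigma}\,q)$ gives the right-hand side $h^{1/2}_{m,n}(\sqrt{\sigma}\,z,\sqrt{\sigma}\,\bz)$.

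The remaining ingredient is the dilation relation for the polyanalytic Hermite functions,
\[
h^{\alpha}_{m,n}(z,\bz)=\alpha^{(m+n)/2}\,h_{m,n}\bigl(\sqrt{\alpha}\,z,\sqrt{\alpha}\,\bz\bigr),
\]
obtained from \eqref{chp} by the substitution $w=\sqrt{\alpha}\,z$ applied to the differential operators $\partial_{\bz}^m\partial_z^n$ via the chain rule. Using this relation with $\alpha=1/2$ at the argument $(\sqrt{\sigma}\,z,\sqrt{\sigma}\,\bz)$ and comparing with its $\alpha=\sigma/2$ instance at $(z,\bz)$ yields
\[
h^{1/2}_{m,n}(\sqrt{\sigma}\,z,\sqrt{\sigma}\,\bz)=\sigma^{-(m+n)/2}\,h^{\sigma/2}_{m,n}(z,\bz).
\]
Substituted back, the factor $\sqrt{\sigma}^{\,m+n}$ produced by the first reduction cancels exactly with $\sigma^{-(m+n)/2}$, leaving the announced constant $(-1)^n\, 2^{m+n}/\sqrt{2}$ unchanged.

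The main pitfall is the bookkeeping of the three intertwined dilations acting respectively on the time variable $x$, on the frequency variable $q$, and on the complex variable $z=z(p,q)$. One must verify that the Jacobian from $y=\sqrt{\sigma}\,x$, the factors $\sqrt{\sigma}^{\,m+n}$ coming from the two Hermite dilations, and the $\sigma^{-(m+n)/2}$ coming from the rescaling of $h^{1/2}_{m,n}$ all fit together so that the denominator $\sqrt{2}$ survives untouched; once that accounting is checked, nothing beyond the chain rule and the cited Theorem~3.1 is required.
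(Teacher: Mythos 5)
Your argument is correct and follows essentially the same route as the paper's own proof: reduce to the unrescaled case via $\V^{\sigma}(h^\sigma_m,h^\sigma_n)(p,q)=\sqrt{\sigma}^{\,m+n}\V(h_m,h_n)(\sqrt{\sigma}p,\sqrt{\sigma}q)$, invoke Theorem~3.1 of \cite{ABEG2015}, and absorb the dilation using $h^{\alpha}_{m,n}(z,\bz)=\sqrt{\alpha}^{\,m+n}h_{m,n}(\sqrt{\alpha}z,\sqrt{\alpha}\bz)$. Your version merely spells out the Jacobian bookkeeping that the paper compresses into ``a straightforward computation,'' and the constants check out.
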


\begin{proof}
 A %tedious but
 straightforward computation starting from the definition of $\V^{\sigma}$ and $h^\sigma_n$ shows
	\begin{align*}
	\V^{\sigma}(h^\sigma_m,h^\sigma_n) (p,q)
	&	= \sqrt{\sigma}^{m+n}	\V(h_m,h_n) (\sqrt{\sigma}p,\sqrt{\sigma}q).
	\end{align*}
	Subsequently, by means of Theorem 3.1 in \cite{ABEG2015} combined with the fact that $$ h^\alpha_{m,n}(z,\bz) := \sqrt{\alpha}^{m+n}
	h_{m,n}(\sqrt{\alpha}z,\sqrt{\alpha}\bz),$$ we obtain
	\begin{align*}
	\V^{\sigma}(h^\sigma_m,h^\sigma_n) (p,q)
	&=\sqrt{\sigma}^{m+n}
(-1)^{n}\sqrt{2}^{m+n-1}
h_{m,n}\left(\frac{\sqrt{\sigma}}{\sqrt{2}}z,
\frac{\sqrt{\sigma}}{\sqrt{2}}\bz\right)\\
	&= (-1)^{n}\frac{2^{m+n}}{\sqrt{2}}   h^{\sigma/2}_{m,n} (z, \bz ).
	\end{align*}
\end{proof}

\begin{rem} \label{}
The range of $L^2_{\C}(\R)$ by the transform  $\V^{\sigma}(\cdot,h^\sigma_n)$  is the Hilbert space spanned by the complex Hermite functions $h^{\sigma/2}_{m,n} (z, \bz )$ for varying $m$, which is clearly connected to the so--called true--poly--Fock space $\mathcal{F}^{2,\sigma/2}_n(\C)$ on $\C$ of level $n$ in Vasilevski's terminology  \cite{Vasilevski2000,Abreu2010}.
\end{rem}

In the sequel, we suggest some natural bicomplex analogs of the Fourier--Wigner transform with input functions belonging to the bicomplex Hilbert spaces $L^2_{\BC}(X)$ with $X=\R$ or $\R^2$ and output functions in $L^2_{\BC}(\BC)$. We then investigate some of their basic properties, such as the Moyal's identity, the identification of their ranges, the connection to some bicomplex transforms of Segal--Bargmann type, among others.
The central idea in obtaining such bicomplex analogs is basically the idempotent decomposition of any $\varphi\in L^2_{\BC}(X)$ as  $ \varphi = \varphi^- \eu  + \varphi^- \es$
 with $\varphi^+,\varphi^- \in L^2_{\C}(X)$.

Throughout the rest of the paper, the notation $\C_\tau$ (with $\tau^2=-1$) will be used to mean the complex plane $\C_\tau:=\{z_\nu:=x+\tau y; \, x,y\in\R\}$ with $\C_i=\C$.

\section{Unidimensional bicomplex Fourier--Wigner transform.}
 %%%%%%%%%%%%%%%%%%%%%%%%%%%%%%%%%%%%%%%%%%%%%%%%%%%%%%%%%%%%%%%%%%%%%%%%%%%%%%%%%%%%%%%%%%%%%%%%%%%%%%%%%%%%%%%%%%%%%%%%%%%%%%%%%%%%

For every given bicomplex number $Z=z_1+jz_2$, where $z_\ell = x_\ell + iy_\ell$, we associate the hyperbolic numbers $X_e=x_1 \eu + x_2\es$ and $Y_e=y_1 \eu + y_2\es$,
 and consider the translation operator
$$T_{X_e} \varphi(t):
= \varphi^+ (t-x_1)\eu + \varphi^-(t-x_2) \es$$
as well as the modified modulation operator
$$M^{\sigma,\nu,\mu}_{X_e,Y_e} \varphi (t) := e^{\sigma(\nu \eu+\mu\es)\left(t - \frac{X_e}2 \right)Y_e} \varphi(t)$$
for given $\varphi = \varphi^+ \eu + \varphi^- \es \in L^2_{\BC}(\R)$.

\begin{defn}
	We call unidimensional bicomplex Fourier--Wigner transform the integral transform $\V^{\sigma,\nu,\mu}_{\R,\BC}$ on $L^2_{\BC}(\R)\times L^2_{\BC}(\R)$ defined by
\begin{eqnarray}\label{V1def}
\V^{\sigma,\nu,\mu}_{\R,\BC}(\varphi,\psi) (Z)
&:=& \left(\frac{\sigma}{\pi}\right)
e^{-\frac{\sigma}4 \left( (X_e^\dagger)^2 + (Y_e^\dagger)^2\right) } \int_{\R} \varphi (t)
 M^{\sigma,\nu,\mu}_{X_e,Y_e} \left(  T_{X_e} \psi(t) \right)^* dt
\end{eqnarray}
with $X_e^\dagger =x_2\eu + x_1 \es$ and $Z*=\bz_1-j\bz_2$.
\end{defn}

The following lemmas will play a crucial rule in establishing the main results of this section. To this end, we introduce $z_{1_\nu}= x_1+\nu y_1$ and $z_{2_\mu}= x_2+\mu y_2$ for given $Z=z_1+jz_2$ with $z_\ell = x_\ell + iy_\ell$; $\ell=1,2$.

\begin{lem}\label{lemMoydec} Let $\varphi,\psi \in L^2_{\BC}(\R)$. Then, we have the splitting formula 
\begin{eqnarray}  \label{V1decomp}
\V^{\sigma,\nu,\mu}_{\R,\BC}(\varphi,\psi) (Z)
&=&  \left(\frac{2\sigma}{\pi}\right)^{\frac{1}{2}}  e^{-\frac{\sigma}4 |z_{2_\mu}|^2} \V^{\sigma,\nu} (\varphi^+,\psi^+) (z_{1_\nu}) \eu
\\ & & \qquad + \left(\frac{2\sigma}{\pi}\right)^{\frac{1}{2}} e^{-\frac{\sigma}4 |z_{1_\nu}|^2}\V^{\sigma,\mu} (\varphi^-,\psi^-) (z_{2_\mu}) \es .\nonumber
\end{eqnarray}
Moreover, $\V^{\sigma,\nu,\mu}_{\R,\BC}(\varphi,\psi) $ belongs to $L^2_{\BC}(\BC)$ and we have the Moyal's identity
\begin{eqnarray}\label{MoyalL3}
\scal{\V^{\sigma,\nu,\mu}_{\R,\BC}(\varphi_1,\psi_1),
	\V^{\sigma,\nu,\mu}_{\R,\BC}(\varphi_2,\psi_2) } _{L^2_{\BC}(\BC)} =
\scal{\varphi_1,\varphi_2} _{L^2_{\BC}(\R)}
\scal{\psi_1,\psi_2} _{L^2_{\BC}(\R)}
\end{eqnarray}
for every $\varphi_\ell,\psi_\ell \in L^2_{\BC}(\R)$; $\ell=1,2$.
\end{lem}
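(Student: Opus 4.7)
The proof is driven entirely by the idempotent decomposition. For any $\varphi\in L^2_{\BC}(\R)$ write $\varphi = \varphi^+\eu + \varphi^-\es$ with $\varphi^\pm\in L^2_{\C}(\R)$, and exploit the elementary rules $\eu^2=\eu$, $\es^2=\es$, $\eu\es=0$, together with the fact that any bicomplex-analytic function of $a\eu+b\es$ splits as $f(a)\eu+f(b)\es$; in particular $e^{a\eu+b\es}=e^a\eu+e^b\es$. This will reduce $\V^{\sigma,\nu,\mu}_{\R,\BC}$ to a pair of classical rescaled Fourier--Wigner transforms $\V^{\sigma,\nu}$ and $\V^{\sigma,\mu}$ acting on the $\C$-valued components, after which Moyal's identity (\ref{Moyal}) will give the bicomplex analog.

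The first step is to split every ingredient appearing in (\ref{V1def}) along $\eu,\es$. Writing $t=t\eu+t\es$ for real $t$ and using $X_e=x_1\eu+x_2\es$, $Y_e=y_1\eu+y_2\es$, a direct computation yields
\begin{equation*}
(\nu\eu+\mu\es)(t-X_e/2)Y_e = \nu(t-x_1/2)y_1\,\eu + \mu(t-x_2/2)y_2\,\es,
\end{equation*}
and therefore
\begin{equation*}
M^{\sigma,\nu,\mu}_{X_e,Y_e}\bigl(T_{X_e}\psi(t)\bigr)^{\!*} = e^{\sigma\nu(t-x_1/2)y_1}\overline{\psi^+(t-x_1)}\,\eu + e^{\sigma\mu(t-x_2/2)y_2}\overline{\psi^-(t-x_2)}\,\es,
\end{equation*}
where the $*$ conjugation acts as ordinary complex conjugation on each idempotent coordinate. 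Similarly $X_e^\dagger=x_2\eu+x_1\es$ and $Y_e^\dagger=y_2\eu+y_1\es$ give
\begin{equation*}
e^{-\frac{\sigma}{4}\bigl((X_e^\dagger)^2+(Y_e^\dagger)^2\bigr)} = e^{-\frac{\sigma}{4}|z_{2_\mu}|^2}\eu + e^{-\frac{\sigma}{4}|z_{1_\nu}|^2}\es.
\end{equation*}
Multiplying by $\varphi(t)=\varphi^+(t)\eu+\varphi^-(t)\es$, integrating over $\R$, and collecting constants (the $(\sigma/\pi)$ prefactor in (\ref{V1def}) combines with the $(2\pi/\sigma)^{1/2}$ from the classical normalization to produce $(2\sigma/\pi)^{1/2}$), each of the two surviving complex integrals is recognised as $\V^{\sigma,\nu}(\varphi^+,\psi^+)(z_{1_\nu})$ or $\V^{\sigma,\mu}(\varphi^-,\psi^-)(z_{2_\mu})$. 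This yields the splitting (\ref{V1decomp}).

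For the Moyal identity, I would substitute (\ref{V1decomp}) into $\scal{\V^{\sigma,\nu,\mu}_{\R,\BC}(\varphi_1,\psi_1),\V^{\sigma,\nu,\mu}_{\R,\BC}(\varphi_2,\psi_2)}_{L^2_{\BC}(\BC)}$ and use the componentwise splitting of the bicomplex $L^2$-inner product (the $\eu$- and $\es$-parts decouple because $\eu\es=0$). The $\eu$-part separates via Fubini into a Gaussian integral $\int_{\C_\mu} e^{-\sigma|z_{2_\mu}|^2/2}\,dz_{2_\mu}=2\pi/\sigma$ (which exactly cancels the $(2\sigma/\pi)$ coming from squaring the prefactor) times the classical $L^2_\C$-inner product of $\V^{\sigma,\nu}(\varphi_1^+,\psi_1^+)$ with $\V^{\sigma,\nu}(\varphi_2^+,\psi_2^+)$ over $z_{1_\nu}$; by (\ref{Moyal}) the latter equals $\scal{\varphi_1^+,\varphi_2^+}\scal{\psi_1^+,\psi_2^+}$. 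The $\es$-part is symmetric, and the two pieces recombine into $\scal{\varphi_1,\varphi_2}_{L^2_{\BC}(\R)}\scal{\psi_1,\psi_2}_{L^2_{\BC}(\R)}$. Setting $\varphi_1=\varphi_2=\varphi$ and $\psi_1=\psi_2=\psi$ gives the square-integrability of $\V^{\sigma,\nu,\mu}_{\R,\BC}(\varphi,\psi)$.

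The main obstacle is purely bookkeeping: tracking the correct bicomplex conjugation conventions (in particular the compatibility of $*$ with the complex conjugation built into the classical $\V^{\sigma,\nu}$), the normalisation of the Lebesgue measure on $\BC\cong\C_\nu\times\C_\mu$, and the precise idempotent form of the bicomplex $L^2$-pairing. No genuinely new estimate is required, since everything is reduced by Fubini and the idempotent decomposition to the classical rescaled Moyal identity (\ref{Moyal}).
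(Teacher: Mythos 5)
Your proposal follows essentially the same route as the paper: idempotent splitting of the modulation, translation and Gaussian prefactor to reduce $\V^{\sigma,\nu,\mu}_{\R,\BC}$ to the pair $\V^{\sigma,\nu}(\varphi^+,\psi^+)\eu+\V^{\sigma,\mu}(\varphi^-,\psi^-)\es$, then Fubini plus the classical Moyal identity \eqref{Moyal} on each idempotent component. The only (immaterial) differences are that the paper verifies membership in $L^2_{\BC}(\BC)$ directly from the splitting before computing the inner product, whereas you deduce it from the Moyal computation, and that your claimed exact cancellation of the Gaussian integral against the squared prefactor is off by the same factor of $4$ that the paper absorbs into its convention $L^2_{\BC}(\C_\nu\times\C_\mu)=4L^2_{\BC}(\BC)$.
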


 \begin{proof}
 The first assertion follows easily from \eqref{V1def} since
 $$(X_e^\dagger)^2 + (Y_e^\dagger)^2 = |z_{2_\mu}|^2\eu + |z_{1_\nu}|^2 \es,$$
 $$M^{\sigma,\nu,\mu}_{X_e,Y_e} \varphi (t) = e^{\nu\sigma\left(t - \frac{x_1}2 \right)y_1} \varphi^+(t)\eu +
 e^{\mu\sigma\left(t - \frac{x_2}2 \right)y_2} \varphi^-(t)\es  $$
  and
  $$T_{X_e} \psi(t) :=
  T_{x_1}\psi^+(t) \eu + T_{x_2} \psi^-(t)\es .$$
In order to prove the second assertion, we notice first that the function
$\V^{\sigma,\nu} (\varphi^+,\psi^+) (z_{1_\nu})$ belongs to the Hilbert space $L^2_{\C}(\C_\nu)$.
Accordingly, the function $e^{-\frac{\sigma}4 |z_{2_\mu}|^2} \V^{\sigma,\nu} (\varphi^+,\psi^+) (z_{1_\nu})$ belongs to  $L^2_{\C}(\C_\nu\times\C_\mu)$. 
 The same observation holds true for %the function
 $e^{-\frac{\sigma}4 |z_{1_\nu}|^2}\V^{\sigma,\mu} (\varphi^-,\psi^-) (z_{2_\mu})$.
This shows that $\V^{\sigma,\nu,\mu}_{\R,\BC}(\varphi,\psi)$ belongs to the Hilbert space
$$L^2_{\C}(\C_\nu)\eu + L^2_{\C}(\C_\mu)\es = L^2_{\BC}(\C_\nu\times\C_\mu)=4 L^2_{\BC}(\BC).$$

Now, by denoting the left--hand side of \eqref{MoyalL3} by $M(\varphi_{1,2},\psi_{1,2})$ and making use of \eqref{V1decomp}, we obtain
\begin{eqnarray}
M(\varphi_{1,2},\psi_{1,2})
& = & \left(\frac{\sigma}{2\pi}\right)
\left(\int_{\C}	e^{-\frac{\sigma}2  |\xi|^2} d\lambda(\xi)\right)
\left(
\scal{ \V^{\sigma,\nu} (\varphi^+_1,\psi^+_1)  ,  \V^{\sigma,\nu} (\varphi^+_2,\psi^+_2)
} _{L^2_{\C}(\C_\nu)} \eu
\right. \nonumber
\\ & & \qquad \qquad \qquad \qquad \qquad+
\left.
\scal{ \V^{\sigma,\mu} (\varphi^-_1,\psi^-_1) ,\V^{\sigma,\mu} (\varphi^-_2,\psi^-_2)
} _{L^2_{\C}(\C_\mu)} \es \right) .\nonumber
\end{eqnarray}
 Consequently, from \eqref{Moyal} we get
 \begin{eqnarray}
 M(\varphi_{1,2},\psi_{1,2})
 &=&
\scal{\varphi^+_1,\varphi^+_2} _{L^2_{\C}(\R)} \scal{\psi^+_1,\psi^+_2} _{L^2_{\C}(\R)} \eu
+ \scal{\varphi^-_1,\varphi^-_2} _{L^2_{\C}(\R)}
\scal{\psi^-_1,\psi^-_2} _{L^2_{\C}(\R)} \es \nonumber
\\ &=&
\left( \scal{\varphi^+_1,\varphi^+_2} _{L^2_{\C}(\R)}\eu
+ \scal{\varphi^-_1,\varphi^-_2} _{L^2_{\C}(\R)}\es\right)
 \nonumber
\\ & & \qquad \qquad \qquad\times
\left(  \scal{\psi^+_1,\psi^+_2} _{L^2_{\C}(\R)} \eu +
\scal{\psi^-_1,\psi^-_2} _{L^2_{\C}(\R)} \es\right)  \nonumber
\\&=&
\scal{\varphi_1,\varphi_2} _{L^2_{\BC}(\R)}
\scal{\psi_1,\psi_2} _{L^2_{\BC}(\R)}. \nonumber
\end{eqnarray}
This completes our check of \eqref{MoyalL3} and hence the one of Lemma \ref{lemMoydec}.
  \end{proof}

Another needed fact is the action of $\V^{\sigma,\nu,\mu}_{\R,\BC}$ on the elementary functions
$$f^\sigma_{m,n}(t) := h^\sigma_m(t) \eu + h^\sigma_n(t) \es.$$ Namely, we assert

\begin{lem} We have
	\begin{eqnarray}\label{actionmrsn}
	\V^{\sigma,\nu,\mu}_{\R,\BC}(f^\sigma_{m,n},f^\sigma_{r,s}) (Z)
	&=&  \left(\frac{\sigma}{\pi}\right)^{\frac{1}{2}}	(-1)^{r}
	2^{m+r}  e^{-\frac{\sigma}4 |z_{2_\mu}|^2} h^{\sigma/2}_{m,r} (z_{1_\nu}, \overline{z_{1_\nu}}) \eu
	 \\&& \qquad\qquad\qquad\qquad  + \left(\frac{\sigma}{\pi}\right)^{\frac{1}{2}} (-1)^{s} 2^{n+s} e^{-\frac{\sigma}4 |z_{1_\nu}|^2 }h^{\sigma/2}_{n,s} (z_{2_\mu}, \overline{z_{2_\mu}})\es. \nonumber
	\end{eqnarray}
\end{lem}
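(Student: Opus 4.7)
The strategy is to reduce everything to the rescaled complex transform $\V^{\sigma}$ via the splitting formula \eqref{V1decomp} and then invoke Proposition \ref{actionVsig}. The two ingredients have been designed for precisely this purpose, so the proof is essentially an identification of factors.

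First I would observe that the idempotent components of the test functions are
$(f^\sigma_{m,n})^+ = h^\sigma_m$, $(f^\sigma_{m,n})^- = h^\sigma_n$, and similarly
$(f^\sigma_{r,s})^+ = h^\sigma_r$, $(f^\sigma_{r,s})^- = h^\sigma_s$.
Inserting this into Lemma~\ref{lemMoydec} gives
\begin{eqnarray*}
\V^{\sigma,\nu,\mu}_{\R,\BC}(f^\sigma_{m,n},f^\sigma_{r,s})(Z)
&=& \left(\frac{2\sigma}{\pi}\right)^{\frac12} e^{-\frac{\sigma}{4}|z_{2_\mu}|^2}
\V^{\sigma,\nu}(h^\sigma_m,h^\sigma_r)(z_{1_\nu}) \, \eu \\
&& {}+ \left(\frac{2\sigma}{\pi}\right)^{\frac12} e^{-\frac{\sigma}{4}|z_{1_\nu}|^2}
\V^{\sigma,\mu}(h^\sigma_n,h^\sigma_s)(z_{2_\mu}) \, \es.
\end{eqnarray*}
The main point is that on each idempotent component the transform is the standard $\C$-valued rescaled Fourier--Wigner transform (the imaginary unit being $\nu$ on the $\eu$-component and $\mu$ on the $\es$-component), acting on the classical rescaled Hermite functions.

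Next I would apply Proposition~\ref{actionVsig}, which gives
$$\V^{\sigma,\nu}(h^\sigma_m,h^\sigma_r)(z_{1_\nu}) = (-1)^{r}\,\frac{2^{m+r}}{\sqrt{2}}\, h^{\sigma/2}_{m,r}(z_{1_\nu},\overline{z_{1_\nu}}),$$
and identically with $(\nu,m,r,z_{1_\nu})$ replaced by $(\mu,n,s,z_{2_\mu})$. Inserting these two formulas into the splitting above, the prefactors combine as
$\left(\tfrac{2\sigma}{\pi}\right)^{1/2}\cdot \tfrac{1}{\sqrt{2}} = \left(\tfrac{\sigma}{\pi}\right)^{1/2}$,
which is exactly the constant appearing in \eqref{actionmrsn}. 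Collecting the $\eu$- and $\es$-parts yields the stated identity.

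The only mild obstacle is noticing that Proposition~\ref{actionVsig} is stated using the standard imaginary unit $i$, whereas here one needs the analogue in $\C_\nu$ and $\C_\mu$. This is harmless: the proof of Proposition~\ref{actionVsig} is purely algebraic and the choice of a square root of $-1$ plays no structural role in the computation, so the same formula holds verbatim with $i$ replaced by $\nu$ or $\mu$. All other manipulations are routine bookkeeping of the constants and exponential weights.
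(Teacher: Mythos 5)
Your proposal is correct and follows exactly the same route as the paper's own proof: split via \eqref{V1decomp}, apply Proposition \ref{actionVsig} on each idempotent component, and absorb the factor $1/\sqrt{2}$ into $\left(\frac{2\sigma}{\pi}\right)^{1/2}$ to get $\left(\frac{\sigma}{\pi}\right)^{1/2}$. Your remark about replacing $i$ by $\nu$ or $\mu$ is a useful clarification that the paper leaves implicit.
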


    \begin{proof}
 The result follows making use of  \eqref{V1decomp} and Proposition \ref{actionVsig}. Indeed, we have
\begin{align*}
	\V^{\sigma,\nu,\mu}_{\R,\BC}&(f^\sigma_{m,n},f^\sigma_{r,s}) (Z)
=  \V^{\sigma,\nu,\mu}_{\R,\BC}(h^\sigma_m \eu + h^\sigma_n \es, h^\sigma_r \eu + h^\sigma_s \es) (Z) \nonumber\\
	&=  \left(\frac{2\sigma}{\pi}\right)^{\frac{1}{2}}  e^{-\frac{\sigma}4 \left( |z_{2_\mu}|^2\eu +|z_{1_\nu}|^2\es\right) }  \left(  \V^{\sigma,\nu} (h^\sigma_m,h^\sigma_r) (z_{1_\nu}) \eu
	+\V^{\sigma,\mu} (h^\sigma_n,h^\sigma_s) (z_{2_\mu}) \es \right) \nonumber\\
	&=    \left(\frac{\sigma}{\pi}\right)^{\frac{1}{2}}  e^{-\frac{\sigma}4 \left( |z_{2_\mu}|^2\eu +|z_{1_\nu}|^2\es\right) }  \left( 	(-1)^{r} 2^{m+r} h^{\sigma/2}_{m,r} (z_{1_\nu}, \overline{z_{1_\nu}}) \eu 	\right. \\
	& \qquad\qquad\qquad \qquad \qquad \qquad \qquad \left.
	+ (-1)^{s} 2^{n+s} h^{\sigma/2}_{n,s} (z_{2_\mu}, \overline{z_{2_\mu}})\es\right) .
\end{align*}
   \end{proof}

Below, we will discuss the basic properties of the transform $ \V^{\sigma,\nu,\mu}_{\R,\BC}$ for the special window function $\psi_{0}(t):= e^{-\frac{\sigma}{2}t^{2}}$. To this end, we need to associate to a bicomplex number $Z= z_1+jz_2 \in\BC$, its companion $Z^e_{\nu,\mu}= z_{1_\nu}\eu + z_{2_\mu}\es$
and perform
$$\BC^e_{\nu,\mu} = \C_\nu \eu - \C_\mu\es=\{Z^e_{\nu,\mu} =(x_1+\nu y_1)\eu (x_2+\mu y_2)\es
, \,x_1, y_1,x_2,y_2\in\R \}.$$
Therefore, any bicomplex--valued function $f(Z)$ on $\BC$
can be seen as a function on $\BC^e_{\nu,\mu}$.

\begin{defn}
A bicomplex--valued function $f$ on $\BC$ is said to be $\BC^e_{\nu,\mu}$--holomorphic if its companion $ f^e(Z^e_{\nu,\mu}):= f(Z)$ is $\BC^e_{\nu,\mu}$--holomorphic in the sense that $f^e$ satisfies the system of first order differential equations
\begin{eqnarray}
\frac{\partial f^e}{\partial {Z^e_{\nu,\mu}}^*} = \frac{\partial f^e}{\partial \overline{Z^e_{\nu,\mu}}}  = \frac{\partial f^e}{\partial {Z^e_{\nu,\mu}}^\dagger}  = 0,
\end{eqnarray}
where
\begin{eqnarray*}
\frac{\partial}{\partial {Z^e_{\nu,\mu}}^*}= \partial_{\bz_{2_\mu}} \eu +\partial_{\bz_{1_\nu}} \es;
\frac{\partial}{\partial \overline{Z^e_{\nu,\mu}}}
= \partial_{\bz_{1_\nu}} \eu + \partial_{\bz_{2_\mu}} \es;
\frac{\partial}{\partial {Z^e_{\nu,\mu}}^\dagger}=  \partial_{z_{2_\mu}} \eu + \partial_{z_{1_\nu}} \es.
\end{eqnarray*}
\end{defn}

This is clearly equivalent to rewrite $f$ in the form
$$f(Z)=f^e(Z^e_{\nu,\mu}) = F(z_{1_\nu}) \eu + G(z_{2_\mu}) \es $$
with $F\in \mathcal{H}ol(\C_\nu)$ (resp $G\in \mathcal{H}ol(\C_\mu)$) is a holomorphic function on $ \C_\nu$ (resp. $\C_\mu$).
A variant bicomplex Bargmann space of the one introduced in \cite{Zine2018} is the following.

\begin{defn}
	We call compagnion $\BC^e_{\nu,\mu}$--Bargmann space, the Hilbert space
		 $\mathcal{F}^{2,\sigma}(\BC^e_{\nu,\mu})$ of all bicomplex--valued $\BC^e_{\nu,\mu}$--holomorphic functions $ f(Z) =f^e(Z^e_{\nu,\mu})= F(z_{1_\nu}) \eu + G(z_{2_\mu}) \es$ such that
	$F\in L^{2,\sigma/2}_{\C}(\C_\nu)$ and $G\in L^{2,\sigma/2}_{\C}(\C_\mu)$.
	Succinctly,	
	$$\mathcal{F}^{2,\sigma}(\BC^e_{\nu,\mu}) = \mathcal{F}^{2,\sigma/2}(\C_\nu) \eu + \mathcal{F}^{2,\sigma/2}(\C_\mu) \es,$$ where $\mathcal{F}^{2,\sigma/2}(\C_\tau)$ denotes the classical complex Bargmann space of weight $\sigma/2$ on $\C_\tau$.
	\end{defn}

This functional space is trivially endowed with the bicomplex scaler product
\begin{eqnarray}\label{BcSP}
\scal{f_1,f_2}_{\BC^e_{\nu,\mu}}
&=& \scal{F_1,F_2}_{L^{2,\sigma/2}_{\C}(\C_\mu)} \eu +   \scal{F_1,F_2}_{L^{2,\sigma/2}_{\C}(\C_\mu)} \es
\end{eqnarray}
for given $
f_\ell(Z) = F(z_{1_\nu}) \eu +   G(z_{2_\mu}) \es  $.
Accordingly, it can be seen as subspace of
$L^2_{\BC}(\C_\nu\times\C_\mu)= 4L^2_{\BC}(\BC)$ by considering
its range $\Mbcnumuo:=  M_{\sigma/2}(\mathcal{F}^{2,\sigma}(\BC^e_{\nu,\mu}))$ by the multiplication operator
 	\begin{eqnarray}
 	M_{\sigma/2} f (Z) &: =&  e^{-\frac{\sigma}2 |Z^e_{\nu,\mu}|^2} f^e (Z^e_{\nu,\mu}) \nonumber \\&=& e^{-\frac{\sigma}4\left( |z_{1_\nu}|^2+ |z_{2_\mu}|^2\right) } \left(  F(z_{1_\nu}) \eu + G(z_{2_\mu}) \es\right)  .\nonumber
  	\end{eqnarray}
 In fact, for given $\Upsilon_\ell= M_{\sigma/2} f_\ell \in \Mbcnumuo$; $\ell=1,2$,
 we have
 	\begin{eqnarray}
 \scal{\Upsilon_1,\Upsilon_2}_{L^2_{\BC}(\BC)}=
 \int_{\BC} \Upsilon_1(Z) (\Upsilon_2(Z))^* d\lambda(Z) = \frac 14 \left( \frac{2\pi}{\sigma}\right) \scal{f_1,f_2}_{\BC^e_{\nu,\mu}}. \nonumber
 \end{eqnarray}
The corresponding bicomplex norm is the one given through
	\begin{eqnarray}\label{bicnorm}
\norm{\Upsilon}^2_{\BC^e_{\nu,\mu}}: = \left( \frac{\pi}{4\sigma}\right) \left( \norm{F}^2_{L^{2,\sigma/2}_{\C}(\C_\nu)} + \norm{G}^2_{L^{2,\sigma/2}_{\C}(\C_\mu)} \right).
\end{eqnarray}
Thus, we claim the following

\begin{prop}
	The space $\Mbcnumuo$ is a reproducing
kernel bicomplex Hilbert space whose kernel fuction is given by $$
K_\sigma (Z^e_{\nu,\mu},W^e_{\nu,\mu}) =  e^{-\frac{\sigma}2 \left( |Z^e_{\nu,\mu}|^2 + |W^e_{\nu,\mu}|^2 + Z^e_{\nu,\mu}(W^e_{\nu,\mu})^*\right) }   .$$
\end{prop}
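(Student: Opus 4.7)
The plan is to reduce the proposition to the classical reproducing-kernel property of the Bargmann-Fock space $\mathcal{F}^{2,\sigma/2}(\C_\tau)$, exploiting the idempotent splitting $\mathcal{F}^{2,\sigma}(\BC^e_{\nu,\mu}) = \mathcal{F}^{2,\sigma/2}(\C_\nu)\eu + \mathcal{F}^{2,\sigma/2}(\C_\mu)\es$ together with the fact that, up to the explicit factor $\pi/(4\sigma)$ appearing in \eqref{bicnorm}, the multiplication operator $M_{\sigma/2}$ is an isometric bicomplex Hilbert-space isomorphism from $\mathcal{F}^{2,\sigma}(\BC^e_{\nu,\mu})$ onto $\Mbcnumuo$ viewed as a subspace of $L^2_{\BC}(\BC)$. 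The proof then consists of three steps: (i) exhibit the reproducing kernel on the Bargmann side, (ii) push it through $M_{\sigma/2}$, and (iii) recast the resulting expression as the single bicomplex exponential claimed.

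For the first step, recall that for each slice $\tau\in\{\nu,\mu\}$, the classical Fock space $\mathcal{F}^{2,\sigma/2}(\C_\tau)$ is a RKHS with kernel $K^\tau(z,w) = e^{(\sigma/2) z\bar w}$. Since the bicomplex scalar product \eqref{BcSP} is diagonal along $\eu,\es$, the two classical reproducing identities applied separately to the idempotent components of $f = F\eu + G\es \in \mathcal{F}^{2,\sigma}(\BC^e_{\nu,\mu})$ yield
\begin{equation*}
f(W) = \scal{f,\tilde K_\sigma(\cdot,W^e_{\nu,\mu})}_{\BC^e_{\nu,\mu}},\qquad
\tilde K_\sigma(Z^e_{\nu,\mu},W^e_{\nu,\mu}) := e^{(\sigma/2) z_{1_\nu}\overline{w_{1_\nu}}}\eu + e^{(\sigma/2) z_{2_\mu}\overline{w_{2_\mu}}}\es.
\end{equation*}
Writing a generic $\Upsilon \in \Mbcnumuo$ as $\Upsilon = M_{\sigma/2} f$ and using the relation between $\scal{\cdot,\cdot}_{L^2_{\BC}(\BC)}$ and $\scal{\cdot,\cdot}_{\BC^e_{\nu,\mu}}$ already recorded above \eqref{bicnorm}, a direct calculation (the Gaussian factors $e^{-(\sigma/2)|Z^e_{\nu,\mu}|^2}$ and $e^{-(\sigma/2)|W^e_{\nu,\mu}|^2}$ being bicomplex-real and hence freely passing across the scalar product) produces
\begin{equation*}
K_\sigma(Z^e_{\nu,\mu},W^e_{\nu,\mu}) = e^{-(\sigma/2)|Z^e_{\nu,\mu}|^2}\,\tilde K_\sigma(Z^e_{\nu,\mu},W^e_{\nu,\mu})\,e^{-(\sigma/2)|W^e_{\nu,\mu}|^2}.
\end{equation*}

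The last step is the algebraic identification of this product with the single bicomplex exponential in the statement. The two Gaussian prefactors merge into $\exp\!\bigl(-\tfrac{\sigma}{2}(|Z^e_{\nu,\mu}|^2 + |W^e_{\nu,\mu}|^2)\bigr)$, while $\tilde K_\sigma$ is itself the bicomplex exponential $\exp\!\bigl(\tfrac{\sigma}{2}(z_{1_\nu}\overline{w_{1_\nu}}\eu + z_{2_\mu}\overline{w_{2_\mu}}\es)\bigr)$ because $\eu\es = 0$. Rereading the inner cross term in $\BC^e_{\nu,\mu}$-coordinates as $-\tfrac{\sigma}{2}\,Z^e_{\nu,\mu}(W^e_{\nu,\mu})^*$ under the $*$-conjugation implicit in the definition of $\partial/\partial{Z^e_{\nu,\mu}}^*$ then delivers the claimed closed form. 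The main obstacle, as is often the case with bicomplex computations, is the bookkeeping: tracking the $\pi/(4\sigma)$-factor when converting between the $L^2_{\BC}(\BC)$- and $\BC^e_{\nu,\mu}$-inner products, and pinning down the sign and idempotent-swap conventions so that $Z^e_{\nu,\mu}(W^e_{\nu,\mu})^*$ really produces the diagonal combination $-(z_{1_\nu}\overline{w_{1_\nu}}\eu + z_{2_\mu}\overline{w_{2_\mu}}\es)$ in the exponent. Once these conventions are fixed, the reproducing property follows mechanically from the two classical reproducing properties on $\mathcal{F}^{2,\sigma/2}(\C_\nu)$ and $\mathcal{F}^{2,\sigma/2}(\C_\mu)$.
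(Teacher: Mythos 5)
The paper offers no proof of this proposition at all (it is introduced by ``we claim the following'' and followed immediately by the next theorem), so there is nothing to compare against; your reduction --- classical Fock-space kernel $e^{\frac{\sigma}{2}z\bar w}$ on each idempotent slice, assembled via the diagonal bicomplex scalar product and pushed through $M_{\sigma/2}$ --- is certainly the intended argument, and steps (i) and (ii) are correct. The genuine gap is exactly the point you defer to ``pinning down the sign and idempotent-swap conventions.'' Under the paper's own definition $Z^*=\bz_1-j\bz_2$, the $*$-conjugation acts on idempotent components as ordinary complex conjugation, so
\begin{equation*}
Z^e_{\nu,\mu}\,(W^e_{\nu,\mu})^* \;=\; z_{1_\nu}\overline{w_{1_\nu}}\,\eu \;+\; z_{2_\mu}\overline{w_{2_\mu}}\,\es
\end{equation*}
with a \emph{plus} sign, not the $-\left(z_{1_\nu}\overline{w_{1_\nu}}\eu + z_{2_\mu}\overline{w_{2_\mu}}\es\right)$ that your last paragraph needs. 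There is no admissible choice of convention that flips this sign, so the kernel you actually computed is
\begin{equation*}
K_\sigma(Z^e_{\nu,\mu},W^e_{\nu,\mu}) \;=\; c_\sigma\, e^{-\frac{\sigma}{2}\left(|Z^e_{\nu,\mu}|^2+|W^e_{\nu,\mu}|^2-\,Z^e_{\nu,\mu}(W^e_{\nu,\mu})^*\right)},
\end{equation*}
i.e.\ the proposition's formula with the sign of the cross term reversed (and with a normalization constant $c_\sigma$, coming from the $\tfrac{\sigma}{2\pi}$ in each slice kernel and the factor relating $\scal{\cdot,\cdot}_{L^2_{\BC}(\BC)}$ to $\scal{\cdot,\cdot}_{\BC^e_{\nu,\mu}}$, which both you and the paper drop).

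A quick sanity check confirms the sign: expanding $F(z)=\sum_n a_n z^n$ against $e^{-\frac{\sigma}{2}z\bar w}$ in $L^{2,\sigma/2}_{\C}(\C_\nu)$ reproduces $F(-w)$ rather than $F(w)$, so the minus-sign cross term cannot give a reproducing kernel for $\Mbcnumuo$. The conclusion is that your strategy is sound and effectively proves the corrected statement, but the final sentence claiming the computation ``delivers the claimed closed form'' is not justified; you should instead state the computed kernel explicitly and record that the displayed formula in the proposition carries a sign error on $Z^e_{\nu,\mu}(W^e_{\nu,\mu})^*$.
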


 Moreover, we prove

\begin{thm} \label{Vsigfmn}
	The transform $\mathcal{S}^{\sigma,\nu,\mu}_0 (\varphi)$ given by
	$$\mathcal{S}^{\sigma,\nu,\mu}_0 (\varphi) := \left(\frac{\sigma}{\pi}\right)^{\frac{1}{4}} \V^{\sigma,\nu,\mu}_{\R,\BC}(\varphi, \psi_{0})$$
	 defines an isometry from $ L^2_{\BC}(\R)$ onto the Hilbert space $ \Mbcnumuo$.
	Moreover, the functions
	\begin{eqnarray}\label{Fctrangenn}
	\varphi^{\sigma,\nu,\mu}_n(Z)
	&=& \left(\frac{\sigma}{\pi}\right)^{\frac{3}{4}} \sigma^{n}
	\left(Z^e_{\nu,\mu}\right)^n  e^{ -\frac{\sigma}{2} |Z^e_{\nu,\mu}|^2  }
	\end{eqnarray}
	form an orthogonal basis of $ \Mbcnumuo$ with norm given by
\begin{eqnarray}\label{normFctrangenn}
\norm{ \varphi^{\sigma,\nu,\mu}_n}^2_{L^2_{\BC}(\BC)}=
\left(\frac\pi \sigma\right)^{\frac{1}{2}}  2^n \sigma^n n! .
\end{eqnarray}
\end{thm}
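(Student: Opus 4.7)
My plan rests on the two workhorses already established: the bicomplex Moyal identity of Lemma \ref{lemMoydec} and the explicit action formula \eqref{actionmrsn}.

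To obtain the isometry, I would instantiate \eqref{MoyalL3} at $\varphi_1=\varphi_2=\varphi$ and $\psi_1=\psi_2=\psi_0$. Since $\psi_0(t)=e^{-\sigma t^2/2}$ is real it lies on the diagonal of $L^2_{\BC}(\R)$ and
$$\norm{\psi_0}^2_{L^2_{\BC}(\R)}=\int_\R e^{-\sigma t^2}\,dt=\sqrt{\pi/\sigma}.$$
Thus $\norm{\V^{\sigma,\nu,\mu}_{\R,\BC}(\varphi,\psi_0)}^2_{L^2_{\BC}(\BC)}=\sqrt{\pi/\sigma}\,\norm{\varphi}^2_{L^2_{\BC}(\R)}$, so the factor $(\sigma/\pi)^{1/4}$ in $\mathcal{S}^{\sigma,\nu,\mu}_0$ is exactly what is needed for norm preservation.

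For the range identification I would feed $\psi_0$ into the splitting formula \eqref{V1decomp}. Each idempotent component involves one of the rescaled complex transforms $\V^{\sigma,\tau}(\varphi^{\pm},\psi_0)(z_{\ell_\tau})$ with $\tau\in\{\nu,\mu\}$. A short ``complete the square'' calculation (expanding $e^{\tau\sigma(x-p/2)q}$ and grouping the $x$--dependent exponentials) identifies these with $(\sigma/2\pi)^{1/2}\,e^{-\sigma|z_{\ell_\tau}|^2/4}$ times an entire $\tau$--holomorphic function of $z_{\ell_\tau}$, i.e.\ with the Segal--Bargmann representation of the windowed Fourier transform with Gaussian window. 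Multiplying by the Gaussian prefactors produced by \eqref{V1decomp} assembles the total weight $e^{-\sigma|Z^e_{\nu,\mu}|^2/2}=e^{-\sigma(|z_{1_\nu}|^2+|z_{2_\mu}|^2)/4}$, so that
$$\mathcal{S}^{\sigma,\nu,\mu}_0(\varphi)(Z)=e^{-\sigma|Z^e_{\nu,\mu}|^2/2}\bigl(F(z_{1_\nu})\eu+G(z_{2_\mu})\es\bigr),$$
which is exactly the form of an element of $\Mbcnumuo$.

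For the orthogonal basis and the surjectivity I would specialize \eqref{actionmrsn} to the pair $(f^\sigma_{n,n},f^\sigma_{0,0})=(h^\sigma_n,\psi_0)$. Since $r=s=0$ all signs disappear, and the elementary identity $h^{\sigma/2}_{n,0}(z,\bz)=(\sigma/2)^n z^n e^{-\sigma|z|^2/4}$ (obtained by differentiating $e^{-\alpha|z|^2}$ in $\bz$ directly in \eqref{chp}), together with $2^n(\sigma/2)^n=\sigma^n$ and an easy regrouping, gives $\mathcal{S}^{\sigma,\nu,\mu}_0(h^\sigma_n)=\varphi^{\sigma,\nu,\mu}_n$. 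Because $\{h^\sigma_n\}$ is an orthogonal basis of $L^2_\C(\R)$, it remains an orthogonal bicomplex basis of $L^2_{\BC}(\R)=L^2_\C(\R)\eu+L^2_\C(\R)\es$; the isometry transports it to the orthogonal family $\{\varphi^{\sigma,\nu,\mu}_n\}_n$ in $\Mbcnumuo$, with
$$\norm{\varphi^{\sigma,\nu,\mu}_n}^2_{L^2_{\BC}(\BC)}=\norm{h^\sigma_n}^2_{L^2_{\BC}(\R)}=\sigma^{n-1/2}\sqrt{\pi}\,2^n n!=\left(\frac{\pi}{\sigma}\right)^{1/2}2^n\sigma^n n!$$
by the standard rescaled Hermite norm. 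Density of the polynomials in each Fock component $\mathcal{F}^{2,\sigma/2}(\C_\tau)$ then implies that the bicomplex linear span of $\{\varphi^{\sigma,\nu,\mu}_n\}_n$ is dense in $\Mbcnumuo$, upgrading the inclusion ``range $\subseteq\Mbcnumuo$'' to surjectivity. The only delicate point is the middle step: one must verify that the completed--square factor $e^{-\sigma z_{\ell_\tau}^2/4}$ is $\tau$--holomorphic in $z_{\ell_\tau}$ (not in its conjugate), so that the outputs really fit the $\BC^e_{\nu,\mu}$--holomorphic structure used to define $\Mbcnumuo$; once this bookkeeping is done, all three assertions of the theorem follow essentially mechanically from the previously established Moyal identity, splitting formula and action formula.
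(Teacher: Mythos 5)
Your proposal is correct and follows essentially the same route as the paper: the Moyal identity \eqref{MoyalL3} with $\psi_1=\psi_2=\psi_0$ and $\norm{\psi_0}^2_{L^2_{\BC}(\R)}=(\pi/\sigma)^{1/2}$ gives the isometry, the action formula \eqref{actionmrsn} specialized to $(f^\sigma_{n,n},f^\sigma_{0,0})$ together with $h^{\sigma/2}_{n,0}(\xi,\overline{\xi})=(\sigma/2)^n\xi^n e^{-\sigma|\xi|^2/4}$ identifies $\mathcal{S}^{\sigma,\nu,\mu}_0(h^\sigma_n)$ with $\varphi^{\sigma,\nu,\mu}_n$, and completeness is obtained from the density of the monomials $(Z^e_{\nu,\mu})^n$ in the companion Bargmann space. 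Your extra ``complete the square'' verification that the range lies in $\Mbcnumuo$ is a harmless supplement to what the paper instead handles via the expansion $\varphi=\sum_n c_n h^\sigma_n$ and continuity of $\mathcal{S}^{\sigma,\nu,\mu}_0$ (and via Corollary \ref{V11}).
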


\begin{proof}
	 Notice first that the window state is the Gaussian centred at the origin $\psi_{0}(t):= e^{-\frac{\sigma}{2}t^{2}}=f_{0,0}(t)$ and $h^\sigma_n=f^\sigma_{n,n}$ for $\eu+\es=1$. Thus, from \eqref{actionmrsn} and the fact $h^\alpha_{n,0}(\xi,\overline{\xi})= \alpha^n \xi^b e^{-\frac{\alpha}{2}|\xi|^2}$, we obtain
	\begin{eqnarray}
	\V^{\sigma,\nu,\mu}_{\R,\BC}(h^\sigma_n, \psi_{0}) (Z)
	&=&\V^{\sigma,\nu,\mu}_{\R,\BC}(f^\sigma_{n,n}, f_{0,0}) (Z) \nonumber
	\\&=&
		\left(\frac{\sigma}{\pi}\right)^{\frac{1}{2}} 2^{n}
	 \left(  e^{-\frac{\sigma}4 |z_{2_\mu}|^2} h^{\sigma/2}_{n,0} (z_{1_\nu}, \overline{z_{1_\nu}}) \eu +   e^{-\frac{\sigma}4 |z_{1_\nu}|^2} h^{\sigma/2}_{n,0} (z_{2_\mu}, \overline{z_{2_\mu}})\es \right) .  \nonumber
\\&=&
	\left(\frac{\sigma}{\pi}\right)^{\frac{1}{2}} \sigma^{n}
\left( z_{1_\nu}^n \eu +   z_{2_\mu}^n \es \right)  e^{ -\frac{\sigma}{4}\left( \left( |z_{2_\mu}|^2 + |z_{1_\nu}|^2\right) \eu    +\left( |z_{1_\nu}|^2 + |z_{2_\mu}|^2\right) \right) \es  } \nonumber
\\&=&
\left(\frac{\sigma}{\pi}\right)^{\frac{1}{2}} \sigma^{n}
\left( z_{1_\nu}^n \eu +   z_{2_\mu}^n \es \right)  e^{ -\frac{\sigma}{2} |Z^e_{\nu,\mu}|^2  }.
\label{rangeV}
	\end{eqnarray}
These functions form clearly an orthogonal system in the Hilbert space $L^2_{\BC}(\BC)$ in virtue of the Moyal's identity \eqref{MoyalL3} satisfied by $\V^{\sigma,\nu,\mu}_{\R,\BC}$ and the orthogonality of $h^\sigma_n$ in $ L^2_{\BC}(\R)$. Indeed, we have
\begin{eqnarray}
\scal{\varphi^{\sigma,\nu,\mu}_m, \varphi^{\sigma,\nu,\mu}_n}_{L^2_{\BC}(\BC)}
&=&
\left(\frac{\sigma}{\pi}\right)^{\frac{1}{2}} \scal{\V^{\sigma,\nu,\mu}_{\R,\BC}(h^\sigma_m, \psi_{0}), \V^{\sigma,\nu,\mu}_{\R,\BC}(h^\sigma_n, \psi_{0})}_{L^2_{\BC}(\BC)} \nonumber
\\&=&
\left(\frac{\sigma}{\pi}\right)^{\frac{1}{2}}\scal{h^\sigma_m, h^\sigma_n}_{L^2_{\BC}(\R)} \scal{ \psi_{0}, \psi_{0}}_{L^2_{\BC}(\R)}\nonumber
\\&=&
\left(\frac{\sigma}{\pi}\right)^{\frac{1}{2}}\norm{\psi_{0}}^2_{L^2_{\C}(\R)}\norm{h^\sigma_n}^2_{L^2_{\C}(\R)} \delta_{m,n}\nonumber
\\&=&
\norm{h^\sigma_n}^2_{L^2_{\C}(\R)} \delta_{m,n}.\nonumber
\end{eqnarray}
This readily follows since $\norm{\psi_{0}}^2_{L^2_{\BC}(\R)}= \left(\frac\pi \sigma\right)^{1/2} $ and consequently gives rise to \eqref{normFctrangenn} for $\norm{h^\sigma_n}^2_{L^2_{\BC}(\R)} = \left(\frac\pi \sigma\right)^{1/2} 2^n \sigma^n n!$. Identity \eqref{normFctrangenn} can also be handled by direct computation using the explicit expression of $\varphi^{\sigma,\nu,\mu}_n$.
The previous result remains valid for any $\varphi \in L^2_{\BC}(\R)$. Indeed,  by applying the Moyal's identity \eqref{MoyalL3}, we get
\begin{align*}
\norm{\mathcal{S}^{\sigma,\nu,\mu}_0 (\varphi) }^2_{L^2_{\BC}(\BC)}
&= \left(\frac \sigma\pi\right)^{1/2} \left|\scal{\V^{\sigma,\nu,\mu}_{\R,\BC}(\varphi, \psi_{0}) ,\V^{\sigma,\nu,\mu}_{\R,\BC}(\varphi, \psi_{0})}_{L^2_{\BC}(\BC)}\right|
\\&= \left(\frac \sigma\pi\right)^{1/2}\left|\scal{\varphi,\varphi}_{L^2_{\BC}(\R)}
\scal{ \psi_{0}, \psi_{0}}_{L^2_{\BC}(\R)} \right|
\\ &=   \norm{\varphi}^2_{L^2_{\BC}(\R)}.
\end{align*}
This shows in particular that $\mathcal{S}^{\sigma,\nu,\mu}_0   \in \Mbcnumuo$.
One can conclude for the proof, by noting that the functions $ \varphi^{\sigma,\nu,\mu}_n(Z) $ in \eqref{Fctrangenn} is a complete orthogonal system in $\Mbcnumuo$ for the monomials $\left(Z^e_{\nu,\mu}\right)^n$ form an orthogonal basis of $L^{2,\sigma/2}_{\BC}(\C_\nu\times\C_\mu)$.
Moreover, for any
 $\displaystyle \varphi(t)= \sum_{n=0}^\infty c_n h^\sigma_n \in L^2_{\BC}(\R),$
we have
$$ \mathcal{S}^{\sigma,\nu,\mu}_0 (\varphi)
= \left(\frac \sigma\pi\right)^{1/4} \sum_{n=0}^\infty c_n  \varphi^{\sigma,\nu,\mu}_n(Z)
$$ which follows by means of \eqref{rangeV} and the continuity of the linear mapping $\mathcal{S}^{\sigma,\nu,\mu}_0$.
\end{proof}

\begin{cor} \label{V11}
	The transform $\mathcal{S}^{\sigma,\nu,\mu}_0$
  is closely connected to the bicomplex Segal--Bargmann transform $\BargBC$ introduced in \cite{Zine2018}. More precisely, we have
	\begin{eqnarray}\label{BicSBT}
\mathcal{S}^{\sigma,i,i}_0(\varphi)(Z)	= \left(\frac{\sigma}{\pi}\right)
	e^{-\frac{\sigma}2   |Z^e_{\nu,\mu}|^2  }
	\int_{\R}    e^{-\sigma \left( t - \frac{Z^e_{\nu,\mu}}{2}\right)^2 }   e^{\frac{\sigma}2 t^2 } \varphi(t)  dt  .
	\end{eqnarray}
\end{cor}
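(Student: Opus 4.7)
The plan is to start from $\mathcal{S}^{\sigma,i,i}_0(\varphi) = (\sigma/\pi)^{1/4}\V^{\sigma,i,i}_{\R,\BC}(\varphi,\psi_0)$, specialize the splitting identity \eqref{V1decomp} of Lemma \ref{lemMoydec} to $\nu=\mu=i$ (so that $z_{1_\nu}=z_1$, $z_{2_\mu}=z_2$ and $Z^e_{i,i}=z_1\eu+z_2\es$), and use that the Gaussian window is idempotent-diagonal, $\psi_0^+=\psi_0^-=\psi_0$. This decomposes $\V^{\sigma,i,i}_{\R,\BC}(\varphi,\psi_0)(Z)$ into two pieces of the form $\sqrt{2\sigma/\pi}\,e^{-\sigma|z_k|^2/4}\V^{\sigma}(\varphi^{\pm},\psi_0)(x_\ell,y_\ell)$, one attached to $\eu$ and one to $\es$.

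Next I would reduce each classical rescaled Fourier--Wigner transform with Gaussian window to Segal--Bargmann form. The key algebraic step is the completing-the-square identity
$e^{-\sigma(t-x)^2/2+i\sigma(t-x/2)y}=e^{-\sigma|z|^2/4}\,e^{-\sigma(t-z/2)^2}\,e^{\sigma t^2/2}$, with $z=x+iy$,
which yields $\V^{\sigma}(f,\psi_0)(x,y)=\sqrt{\sigma/(2\pi)}\,e^{-\sigma|z|^2/4}\int_\R f(t)\,e^{-\sigma(t-z/2)^2+\sigma t^2/2}\,dt$. Substituting this into both idempotent branches, the weights $e^{-\sigma|z_2|^2/4}$ and $e^{-\sigma|z_1|^2/4}$ from the splitting combine with the $e^{-\sigma|z_1|^2/4}\eu$ and $e^{-\sigma|z_2|^2/4}\es$ weights emerging from the Bargmann reduction to produce the common scalar factor $e^{-\sigma(|z_1|^2+|z_2|^2)/4}(\eu+\es)$, which equals $e^{-\sigma|Z^e_{i,i}|^2/2}$ under the Euclidean convention $|Z^e_{\nu,\mu}|^2=(|z_{1_\nu}|^2+|z_{2_\mu}|^2)/2$ already implicitly used in the proof of Theorem \ref{Vsigfmn}.

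Finally, I would merge the two scalar integrals into a single bicomplex integral via the idempotent multiplicativity identities $\eu\es=0$, $\eu^2=\eu$, $\es^2=\es$. A direct check gives $\varphi(t)\,e^{-\sigma(t-Z^e_{i,i}/2)^2}=\varphi^+(t)\,e^{-\sigma(t-z_1/2)^2}\eu+\varphi^-(t)\,e^{-\sigma(t-z_2/2)^2}\es$, so the two branches glue into $\int_\R \varphi(t)\,e^{-\sigma(t-Z^e_{i,i}/2)^2}\,e^{\sigma t^2/2}\,dt$. Collecting the numerical prefactors from the splitting, from the Bargmann reduction, and from the $(\sigma/\pi)^{1/4}$ in the definition of $\mathcal{S}^{\sigma,i,i}_0$ then yields the integral representation \eqref{BicSBT}.

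The main technical step is the completing-the-square identity of the second paragraph: while elementary, it must be executed with care to isolate the $e^{-\sigma|z|^2/4}$ factor, for it is not a priori transparent that the exponential weights arising on the two idempotent branches should assemble into the bicomplex Euclidean Gaussian $e^{-\sigma|Z^e_{\nu,\mu}|^2/2}$. The symmetry of the splitting formula, which distributes $e^{-\sigma|z_2|^2/4}\eu$ against $e^{-\sigma|z_1|^2/4}\es$ (crossing the roles of $z_1$ and $z_2$), is precisely what makes this collapse succeed, and the same symmetry is what allows the two scalar integrals to be reassembled into a single bicomplex Segal--Bargmann kernel.
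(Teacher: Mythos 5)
Your proposal is correct and follows exactly the route the paper intends (the paper merely asserts the identity ``follows by a tedious but straightforward computation''): idempotent splitting via Lemma \ref{lemMoydec}, completing the square in each classical Gaussian-window branch, and reassembly through the idempotent functional calculus --- and your completing-the-square identity checks out. One small caveat on the constants: the splitting prefactor $\left(2\sigma/\pi\right)^{1/2}$ times the $\left(\sigma/2\pi\right)^{1/2}$ inside each classical $\V^{\sigma}$ gives exactly $\sigma/\pi$ for $\V^{\sigma,i,i}_{\R,\BC}(\varphi,\psi_0)$ itself, so the extra $\left(\sigma/\pi\right)^{1/4}$ in the definition of $\mathcal{S}^{\sigma,i,i}_0$ makes the correct prefactor $\left(\sigma/\pi\right)^{5/4}$ rather than the stated $\sigma/\pi$ --- a normalization slip already present in the paper's own display, which you should not reproduce by asserting that the prefactors ``collect'' to the stated value without checking.
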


\begin{proof}
	Identity \eqref{BicSBT} which follows by a tedious but straightforward computation. Indeed, we obtain% yields
	\begin{eqnarray} 
		\mathcal{S}^{\sigma,\nu,\mu}_0(\varphi)(Z)
	&=&\left(\frac{\sigma}{\pi}\right)
	e^{-\frac{\sigma}2   |Z^e_{\nu,\mu}|^2  }
	\int_{\R}    e^{-\sigma \left( t - \frac{Z^e_{\nu,\mu}}{2}\right)^2 }   e^{\frac{\sigma}2 t^2 } \varphi(t)  dt , \nonumber
\end{eqnarray}
so that for $\nu=\mu=i$, we recover the bicomplex Segal--Bargmann transform introduced in \cite[Eq. (5.6) ]{Zine2018} (with $\nu=\sigma$ there) from  $ L^2_{\BC}(\R)$ onto the bicomplex Bargmann space
$\mathcal{CF}^{2,\sigma}(\BC^e_{i,i})$. 
\end{proof}

The last result of this section identifies the range $ \V^{\sigma,\nu,\mu}_{\R,\BC}(L^2_{\BC}(\R)\times L^2_{\BC}(\R))$ as special bicomplex--analytic closed subspace of $L^2_{\BC}(\BC)$.

\begin{defn}
	We call bicomplex $(n^*,1^{-},1^\dagger)$--$\BC^e_{\nu,\mu}$-- companion Bargmann space of first kind the Hilbert space $\mathcal{F}^{2,\sigma,\nu,\mu}_n(\BC^e_{\nu,\mu})$ of all bicomplex--valued functions
	$f(Z) = f^e(Z^e_{\nu,\mu})= F(z_{1_\nu}) \eu + G(z_{2_\mu}) \es $ satisfying the system
\begin{eqnarray}
\frac{\partial^{n+1}  f^e}{\partial [{Z^e_{\nu,\mu}}^*]^{n+1}}
= \frac{\partial  f^e}{\partial \overline{(Z^e_{\nu,\mu})}} =
 \frac{\partial  f^e}{\partial (Z^e_{\nu,\mu})^\dagger} = 0,
\end{eqnarray}
	and $\norm{F}^{2}_{L^{2,\sigma/2}_{\C}(\C_\nu)}$ and $\norm{G}^2_{L{2,\sigma/2}_{\C}(\C_\mu)} $ are finite.
\end{defn}

Thus, we claim the following (we omit the proof for its similarity to one provided above in the case $n=0$).

\begin{lem}\label{lemDecBicn}
The spaces $\Mbcnumun:= M_{\sigma/2} (\mathcal{F}^{2,\sigma,\nu,\mu}_n(\BC^e_{\nu,\mu})) $
are closed subspaces of $L^2_{\BC}(\BC)$ and we have
\begin{align} \label{DecompBFS}
\Mbcnumun =
 	 e^{-\frac{\sigma}2  |Z^e_{\nu,\mu}|^2 }  \left( \mathcal{F}^{2,\sigma/2}_n(\C_\nu) \eu +     \mathcal{F}^{2,\sigma/2}_n(\C_\mu)\es \right) .
\end{align}
Moreover, they are pairewisely orthogonal in $L^2_{\BC}(\BC)$.
\end{lem}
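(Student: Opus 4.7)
The plan is to mirror almost verbatim the $n=0$ argument sketched after Definition 3.3, reducing the bicomplex system of PDEs that defines $\mathcal{F}^{2,\sigma,\nu,\mu}_n(\BC^e_{\nu,\mu})$ to a pair of independent scalar polyanalyticity conditions on the two idempotent components, and then identifying each with the classical true poly--Fock space of level $n$. Writing a generic element as $f^e = f^+\eu + f^-\es$, I would expand each of the three defining equations componentwise using $\eu^2=\eu$, $\es^2=\es$ and $\eu\es=0$. The two first--order equations in $\overline{Z^e_{\nu,\mu}}$ and $(Z^e_{\nu,\mu})^\dagger$ impose exactly the same constraints as in the $n=0$ case, forcing $f^+$ to live on $\C_\nu$ and $f^-$ on $\C_\mu$. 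The $(n+1)$--st order equation then reduces, by the same componentwise expansion applied to the $(n+1)$--st power of the star--derivative, to the anti--holomorphic differential equation defining the true poly--Fock space $\mathcal{F}^{2,\sigma/2}_n(\C_\nu)$ on the $\eu$--part and $\mathcal{F}^{2,\sigma/2}_n(\C_\mu)$ on the $\es$--part. Combined with the norm--integrability built into the definition, this yields
$$\mathcal{F}^{2,\sigma,\nu,\mu}_n(\BC^e_{\nu,\mu}) = \mathcal{F}^{2,\sigma/2}_n(\C_\nu)\eu + \mathcal{F}^{2,\sigma/2}_n(\C_\mu)\es.$$

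Next, I would apply the multiplication operator $M_{\sigma/2}$ termwise. Since the Gaussian weight splits as $e^{-\frac{\sigma}{2}|Z^e_{\nu,\mu}|^2} = e^{-\frac{\sigma}{4}|z_{1_\nu}|^2}\eu + e^{-\frac{\sigma}{4}|z_{2_\mu}|^2}\es$ and acts independently on each idempotent component, the decomposition \eqref{DecompBFS} is a direct consequence. Closedness of $\Mbcnumun$ inside $L^2_{\BC}(\BC)$ then follows from the isometric decomposition of the bicomplex norm \eqref{bicnorm} together with the well--known closedness of each scalar true poly--Fock space $\mathcal{F}^{2,\sigma/2}_n(\C_\tau)$: a Cauchy sequence in $\Mbcnumun$ projects onto two Cauchy sequences in the scalar true poly--Fock spaces, whose componentwise limits reassemble into an element of $\Mbcnumun$.

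For the pairwise orthogonality, I would fix $m\neq n$ and pick $\Upsilon_m\in \mathcal{M}^{2,\sigma}_m(\BC^e_{\nu,\mu})$ and $\Upsilon_n\in \Mbcnumun$. Invoking the bicomplex inner product \eqref{BcSP}, the $L^2_{\BC}(\BC)$--pairing $\scal{\Upsilon_m,\Upsilon_n}_{L^2_{\BC}(\BC)}$ splits into an $\eu$--contribution given by the $L^{2,\sigma/2}_{\C}(\C_\nu)$--pairing of the $\eu$--components and an $\es$--contribution of the analogous form on $\C_\mu$. Since these $\eu$--components belong to the distinct true poly--Fock spaces $\mathcal{F}^{2,\sigma/2}_m(\C_\nu)$ and $\mathcal{F}^{2,\sigma/2}_n(\C_\nu)$ (and similarly for the $\es$--parts on $\C_\mu$), the classical orthogonality of true poly--Fock spaces of different levels makes both contributions vanish.

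The main technical hurdle is the first step: carefully expanding the $(n+1)$--st power of the star--derivative through the idempotent algebra and reading off the precisely expected polyanalyticity order on each component (without spurious cross terms, which indeed vanish since $\eu\es=0$). Once that reduction is in place, every subsequent step is a routine scalar--to--bicomplex transfer that parallels the $n=0$ treatment developed earlier in Section 3.
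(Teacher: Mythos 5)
First, note that the paper itself \emph{omits} the proof of this lemma ("we omit the proof for its similarity to the case $n=0$"), so you are reconstructing rather than matching an argument; your overall route --- idempotent componentwise reduction of the defining PDE system followed by scalar poly--Fock theory --- is certainly the intended one. Still, there is a minor and a substantive gap. The minor one: with the operators exactly as displayed in the paper, the $\eu$--component of $\partial/\partial({Z^e_{\nu,\mu}})^*$ is $\partial_{\bz_{2_\mu}}$, so the $(n+1)$--st power of the star--derivative acts on the $\eu$--part in the variable $\bz_{2_\mu}$ and becomes vacuous once the bar-- and dagger--equations have forced that part to depend on $z_{1_\nu}$ alone; to read off $\partial_{\bz_{1_\nu}}^{n+1}F=0$ you must use the assignment $\partial/\partial({Z^e_{\nu,\mu}})^*=\partial_{\bz_{1_\nu}}\eu+\partial_{\bz_{2_\mu}}\es$ (the paper's displayed formulas for the star-- and bar--derivatives appear to be interchanged), and your claim that the expansion yields "the precisely expected polyanalyticity order on each component" silently assumes this correction.

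The substantive gap is that your identification step and your orthogonality step are mutually incompatible. The condition $\partial_{\bz_{1_\nu}}^{n+1}F=0$ together with square--integrability characterizes the \emph{full} polyanalytic Fock space of order $n+1$, which is the orthogonal sum of the true poly--Fock spaces of levels $0,1,\dots,n$; it does not single out the true poly--Fock space of level $n$ spanned by $\{h^{\sigma/2}_{m,n}\}_m$, which is what $\mathcal{F}^{2,\sigma/2}_n(\C_\tau)$ must mean for consistency with Remark 2.2 and with the basis exhibited in Proposition \ref{PropnBCBarg}. The full poly--Fock spaces are \emph{nested}, not pairwise orthogonal, so if the components of $\Mbcnumun$ are identified via the PDE system alone, your final appeal to "the classical orthogonality of true poly--Fock spaces of different levels" does not apply to the spaces you have actually produced: for $m<n$ the $\eu$--contribution to $\scal{\Upsilon_m,\Upsilon_n}_{L^2_{\BC}(\BC)}$ need not vanish. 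To prove the lemma as stated you must either replace the PDE characterization of $\mathcal{F}^{2,\sigma,\nu,\mu}_n(\BC^e_{\nu,\mu})$ by the true poly--Fock spaces (for instance, take the closed span of $\{h^{\sigma/2}_{m,n}\}_m$ on each idempotent component, or the orthocomplement of the order--$n$ polyanalytic space inside the order--$(n+1)$ one), or accept nestedness in place of pairwise orthogonality; the decomposition \eqref{DecompBFS}, the closedness argument, and the orthogonality claim only cohere under the first reading.
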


The decomposition \eqref{DecompBFS} must be understood in the sense that the bicomplex--valued function $f\in \mathcal{F}^{2,\sigma,\nu,\mu}_n(\BC^e_{\nu,\mu})$ is of the form
$$
f(Z)= F(z_{1_\nu})\eu + G(z_{2_\mu})\es
$$
with $F$ and $G$ are the $\C$--valued functions belonging to $\mathcal{F}^{2,\sigma}_n(\C_\nu)$ and $\mathcal{F}^{2,\sigma}_n(\C_\mu)$, respectively.
We endow $\mathcal{F}^{2,\sigma,\nu,\mu}_n(\BC^e_{\nu,\mu})$ with the bicomplex scalar product $\scal{\cdot,\cdot}_{\BC^e_{\nu,\mu}}$ in \eqref{BcSP}. The associated bicomplex norm is given by \eqref{bicnorm}.

\begin{prop}\label{PropnBCBarg}
	The functions $\psi^{\sigma,\nu,\mu}_{m,n}(Z^e_{\nu,\mu},{Z^e_{\nu,\mu}}^*)
	= e^{-\frac{\sigma}2 |Z^e_{\nu,\mu}|^2 } H^{\sigma}_{m,n}(Z^e_{\nu,\mu},{Z^e_{\nu,\mu}}^*) $, where
	\begin{align}\label{basisnBCBargspace}
			h^{\sigma}_{m,n}(Z^e_{\nu,\mu},{Z^e_{\nu,\mu}}^*) :=
			(-1)^{m+n} 	e^{\frac{\sigma}4 Z^e_{\nu,\mu} {Z^e_{\nu,\mu}}^*  }
			\frac{\partial^{m+n}}{\partial ({Z^e_{\nu,\mu}}^*)^m \partial(Z^e_{\nu,\mu})^n} \left(
				e^{-\frac{\sigma}2 Z^e_{\nu,\mu}{Z^e_{\nu,\mu}}^* }\right)
	\end{align}
	for varying $m$, form an orthogonal basis of the infinite $\BC^e_{\nu,\mu}$-Hilbert space $\Mbcnumun$.
\end{prop}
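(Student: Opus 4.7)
The plan is to reduce the statement to the analogous result for the complex true poly-Fock spaces $\mathcal{F}^{2,\sigma/2}_n(\C_\tau)$ via the idempotent decomposition, using Lemma \ref{lemDecBicn} as the key structural bridge. First, I would establish that the bicomplex polyanalytic Hermite function $h^{\sigma}_{m,n}(Z^e_{\nu,\mu},{Z^e_{\nu,\mu}}^*)$ splits nicely under the $\eu/\es$ decomposition. Using the explicit form of the operators $\partial/\partial(Z^e_{\nu,\mu})$ and $\partial/\partial({Z^e_{\nu,\mu}}^*)$ together with the basic idempotent identities $\eu^2=\eu$, $\es^2=\es$, $\eu\es=0$, one shows that the $m$-th power of $\partial/\partial({Z^e_{\nu,\mu}}^*)$ acts as $(\partial_{\bz_{1_\nu}})^m$ on the $\eu$-branch and $(\partial_{\bz_{2_\mu}})^m$ on the $\es$-branch (possibly after reordering coming from the $*$-involution), and similarly for $\partial/\partial(Z^e_{\nu,\mu})$. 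Combined with the multiplicative factorization $e^{-\frac{\sigma}{2}Z^e_{\nu,\mu}{Z^e_{\nu,\mu}}^*} = e^{-\frac{\sigma}{2}|z_{1_\nu}|^2}\eu + e^{-\frac{\sigma}{2}|z_{2_\mu}|^2}\es$ of the Gaussian, this yields the key splitting
\begin{equation*}
h^{\sigma}_{m,n}(Z^e_{\nu,\mu},{Z^e_{\nu,\mu}}^*) = h^{\sigma/2}_{m,n}(z_{1_\nu},\bz_{1_\nu})\,\eu + h^{\sigma/2}_{m,n}(z_{2_\mu},\bz_{2_\mu})\,\es,
\end{equation*}
from which, after multiplication by the Gaussian weight, one reads off
\begin{equation*}
\psi^{\sigma,\nu,\mu}_{m,n} = e^{-\frac{\sigma}{4}|z_{1_\nu}|^2}\,h^{\sigma/2}_{m,n}(z_{1_\nu},\bz_{1_\nu})\,\eu + e^{-\frac{\sigma}{4}|z_{2_\mu}|^2}\,h^{\sigma/2}_{m,n}(z_{2_\mu},\bz_{2_\mu})\,\es.
\end{equation*}

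Next, I would appeal to the classical result (see e.g.\ \cite{Vasilevski2000,Abreu2010} and the discussion in Remark following Proposition \ref{actionVsig}) that for fixed $n$ the univariate polyanalytic Hermite functions $\{h^{\sigma/2}_{m,n}(z,\bz)\}_{m\in\N}$ form an orthogonal basis of the true poly-Fock space $\mathcal{F}^{2,\sigma/2}_n(\C_\tau)$ (equivalently, $\{e^{-\frac{\sigma}{4}|z|^2}h^{\sigma/2}_{m,n}\}_{m}$ is an orthogonal basis of its image under $M_{\sigma/2}$ inside $L^2_{\C}(\C_\tau)$). Applied to $\tau=\nu$ on the $\eu$-branch and $\tau=\mu$ on the $\es$-branch, this gives completeness and orthogonality in each component.

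Finally, Lemma \ref{lemDecBicn} identifies $\Mbcnumun$ precisely as the idempotent sum of these two scalar spaces, and the $\BC$-valued scalar product \eqref{BcSP} decomposes diagonally along $\eu$ and $\es$ (since $\eu\es=0$). Consequently the orthogonality on each branch lifts to a $\BC$-orthogonality of the family $\{\psi^{\sigma,\nu,\mu}_{m,n}\}_{m}$ in $\Mbcnumun$, while completeness follows by expanding any $f = F\eu + G\es \in \Mbcnumun$ along the two bases on each branch and reassembling the resulting series. The main obstacle I foresee is the careful idempotent bookkeeping in the differential-operator step: one has to confirm that the mixed-looking expressions $z_{1_\nu}\bz_{2_\mu}\eu + z_{2_\mu}\bz_{1_\nu}\es$ arising from naive substitution actually reorganize, under the $*$-involution and the precise form of $\partial/\partial {Z^e_{\nu,\mu}}^*$ recorded before Definition of $\Mbcnumuo$, into genuinely holomorphic/antiholomorphic expressions on each $\C_\nu$, $\C_\mu$ branch, so that the reduction to the scalar poly-Fock setting goes through cleanly.
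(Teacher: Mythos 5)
Your proposal is correct and follows essentially the same route as the paper's own (very terse) proof: the paper likewise rests on the idempotent splitting $H^{\sigma}_{m,n}(Z^e_{\nu,\mu},{Z^e_{\nu,\mu}}^*)=H^{\sigma/2}_{m,n}(z_{1_\nu})\eu+H^{\sigma/2}_{m,n}(z_{2_\mu})\es$ together with the fact that the complex Hermite functions $h^{\sigma/2}_{m,n}$ form an orthogonal basis of the relevant true poly--Fock space on each branch, combined with the decomposition of $\Mbcnumun$ from Lemma \ref{lemDecBicn}. Your write-up simply supplies the idempotent bookkeeping that the paper leaves implicit.
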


\begin{proof}
The proof is similar to one provided for $n=0$, but here we make use of the fact that the complex Hermite functions $h^{\sigma/2}_{m,n}(\xi)$ is an orthogonal basis of  $L^{2,\sigma/2}_{\C}(\C_\tau)$ and that
$$ H^{\sigma/2}_{m,n}(z_{1_\nu}) \eu +  H^{\sigma/2}_{m,n}(z_{2_\mu}) \es
= 	H^{\sigma}_{m,n}(Z^e_{\nu,\mu},{Z^e_{\nu,\mu}}^*). $$
\end{proof}

\begin{thm}
	The transform
	$$ \mathcal{S}^{\sigma,\nu,\mu}_n f :=\frac{\left(\frac{\sigma}{\pi} \right)^{1/4}}{\sqrt{2^n\sigma^n n!}} \V^{\sigma,\nu,\mu}_{\R,\BC}(f,h^\sigma_n)$$ corresponding to the window function $h^\sigma_n$ defines an isometry from $L^2_{\BC}(\R)$ onto the Hilbert space $\Mbcnumun$.
\end{thm}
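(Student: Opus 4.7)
The proof is the direct analogue of Theorem~\ref{Vsigfmn}, replacing the vacuum window $\psi_0$ by the higher Hermite state $h^\sigma_n$ and the monomial system by the polyanalytic Hermite basis of $\Mbcnumun$ furnished by Proposition~\ref{PropnBCBarg}.

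The isometry property is immediate from the bicomplex Moyal identity \eqref{MoyalL3} applied with $(\varphi_1,\varphi_2) = (f,f)$ and $(\psi_1,\psi_2) = (h^\sigma_n,h^\sigma_n)$:
$$\norm{\mathcal{S}^{\sigma,\nu,\mu}_n f}^2_{L^2_{\BC}(\BC)} = \frac{(\sigma/\pi)^{1/2}}{2^n\sigma^n n!}\scal{f,f}_{L^2_{\BC}(\R)}\scal{h^\sigma_n,h^\sigma_n}_{L^2_{\BC}(\R)}.$$
Invoking $\norm{h^\sigma_n}^2_{L^2_{\BC}(\R)} = (\pi/\sigma)^{1/2}\,2^n\sigma^n n!$ (already exploited in the proof of Theorem~\ref{Vsigfmn}), the numerical prefactors cancel and $\norm{\mathcal{S}^{\sigma,\nu,\mu}_n f}_{L^2_{\BC}(\BC)} = \norm{f}_{L^2_{\BC}(\R)}$ follows.

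To pin down the range, I would test $\mathcal{S}^{\sigma,\nu,\mu}_n$ on the orthogonal basis $\{h^\sigma_m\}_{m\ge 0}$ of $L^2_{\BC}(\R)$. Substituting $(m,n,r,s)\leftrightarrow(m,m,n,n)$ in \eqref{actionmrsn} and using $h^\sigma_k = f^\sigma_{k,k}$ (because $\eu+\es=1$) yields an $\eu$-piece proportional to $e^{-\frac{\sigma}{4}|z_{2_\mu}|^2}h^{\sigma/2}_{m,n}(z_{1_\nu},\overline{z_{1_\nu}})$ and an $\es$-piece proportional to $e^{-\frac{\sigma}{4}|z_{1_\nu}|^2}h^{\sigma/2}_{m,n}(z_{2_\mu},\overline{z_{2_\mu}})$. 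Using the identity $h^{\sigma/2}_{m,n}(z,\bz) = e^{-\frac{\sigma}{4}|z|^2}H^{\sigma/2}_{m,n}(z,\bz)$ to peel off the intrinsic Gaussian and then reassembling via the idempotent decomposition $H^{\sigma}_{m,n}(Z^e_{\nu,\mu},{Z^e_{\nu,\mu}}^*) = H^{\sigma/2}_{m,n}(z_{1_\nu})\eu + H^{\sigma/2}_{m,n}(z_{2_\mu})\es$ noted in the proof of Proposition~\ref{PropnBCBarg}, one identifies $\mathcal{S}^{\sigma,\nu,\mu}_n(h^\sigma_m)$ as a non-zero scalar multiple of the basis element $\psi^{\sigma,\nu,\mu}_{m,n}$.

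Since $\mathcal{S}^{\sigma,\nu,\mu}_n$ is an isometry its range is closed in $L^2_{\BC}(\BC)$, and the previous step shows that this range contains every basis vector of $\Mbcnumun$ listed in Proposition~\ref{PropnBCBarg}; continuity of $\mathcal{S}^{\sigma,\nu,\mu}_n$ together with the basis expansion $f=\sum_m c_m h^\sigma_m$ in $L^2_{\BC}(\R)$ (with bicomplex coefficients $c_m$) then identifies the range as $\Mbcnumun$ exactly. The step that requires actual bookkeeping --- and which I expect to be the main obstacle --- is reconciling the split Gaussians $e^{-\frac{\sigma}{4}|z_{2_\mu}|^2}\eu$ and $e^{-\frac{\sigma}{4}|z_{1_\nu}|^2}\es$ coming out of \eqref{actionmrsn} with the symmetric global weight $e^{-\frac{\sigma}{2}|Z^e_{\nu,\mu}|^2}$ appearing in the definition of $\psi^{\sigma,\nu,\mu}_{m,n}$, after absorbing the Gaussian built into $h^{\sigma/2}_{m,n}$.
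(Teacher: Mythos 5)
Your proposal is correct and follows essentially the same route as the paper: the isometry comes from the Moyal identity \eqref{MoyalL3} together with $\norm{h^\sigma_n}^2_{L^2_{\BC}(\R)}=(\pi/\sigma)^{1/2}2^n\sigma^n n!$, and the range is identified by evaluating the transform on the basis $\{h^\sigma_m\}_m$ via \eqref{actionmrsn} and recognizing the output as multiples of the $\psi^{\sigma,\nu,\mu}_{m,n}$ of Proposition \ref{PropnBCBarg}. The Gaussian bookkeeping you flag does close up as you expect, since $e^{-\frac{\sigma}4|z_{2_\mu}|^2}\eu+e^{-\frac{\sigma}4|z_{1_\nu}|^2}\es$ combined with the Gaussian inside $h^{\sigma/2}_{m,n}$ reproduces the global weight $e^{-\frac{\sigma}2|Z^e_{\nu,\mu}|^2}$, exactly as in the displayed computation of the paper's proof.
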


\begin{proof}
	The proof can be handled in a similar way as for Theorem \ref{Vsigfmn} (for $n=0$) with $h^\sigma_0=\psi_0$. Let just mention that the expression of the functions $\V^{\sigma,\nu,\mu}_{\R,\BC}(h^\sigma_m,h^\sigma_n)$ is a particular of \eqref{actionmrsn}. Thus,
	\begin{eqnarray}
	\V^{\sigma,\nu,\mu}_{\R,\BC}(h^\sigma_m,h^\sigma_n) (Z)
	&=&
	\V^{\sigma,\nu} (f^\sigma_{m,m},f^\sigma_{n,n}) (Z) 	\nonumber \\
	&=&(-1)^{n} 2^{m+n} \left(\frac{\sigma}{\pi}\right)^{\frac{1}{2}} 		
		e^{-\frac{\sigma}2 |Z^e_{\nu,\mu}|^2 }\left(  h^{\sigma/2}_{m,n}(z_{1_\nu}) \eu +  h^{\sigma/2}_{m,n}(z_{2_\mu}) \es\right)   , \nonumber \\
		&=&(-1)^{n} 2^{m+n} \left(\frac{\sigma}{\pi}\right)^{\frac{1}{2}}
		\psi^{\sigma,\nu,\mu}_{m,n}(Z^e_{\nu,\mu},{Z^e_{\nu,\mu}}^*) 	
	%\label{actionVmn}
	\end{eqnarray}
	where $\psi^{\sigma,\nu,\mu}_{m,n}$ are as in Proposition \ref{PropnBCBarg}. 
		The range of $L^2_{\BC}(\R)$ by $\mathcal{S}^{\sigma,\nu,\mu}_n$   is then spanned by the bicomplex Hermite functions $\psi^{\sigma,\nu,\mu}_{m,n}$ for varying $m$ ($n$ fixed).
	Thus, one can conclude making use of Proposition \ref{PropnBCBarg} and the Moyal's identity \eqref{MoyalL3}.
\end{proof}

\begin{thm}
	The transform  $ \V^{\sigma,\nu,\mu}_{\R,\BC}$ defines an isometry from $L^2_{\BC}(\R)\times L^2_{\BC}(\R)$ onto the Hilbert space $$\MbcnumuT:=\bigoplus_{n=0}^{+\infty} \Mbcnumun.$$
\end{thm}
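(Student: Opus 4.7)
The plan is to combine Moyal's identity \eqref{MoyalL3} with the preceding theorem (which identifies each $\mathcal{S}^{\sigma,\nu,\mu}_n$ as an isometric bijection from $L^2_\BC(\R)$ onto $\Mbcnumun$) and with the pairwise orthogonality of the subspaces $\Mbcnumun$ inside $L^2_\BC(\BC)$ provided by Lemma \ref{lemDecBicn}. The argument splits naturally into two parts: first, showing that $\V^{\sigma,\nu,\mu}_{\R,\BC}$ is norm-preserving in the appropriate tensor-product sense; second, showing that its image is exactly the direct sum $\MbcnumuT$.

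For the isometry part, I would read \eqref{MoyalL3} as the statement that the bilinear map $\V^{\sigma,\nu,\mu}_{\R,\BC}$ sends a pure tensor $\varphi\otimes\psi \in L^2_\BC(\R)\otimes L^2_\BC(\R)$ to a vector of $L^2_\BC(\BC)$ whose bicomplex inner product against $\V^{\sigma,\nu,\mu}_{\R,\BC}(\varphi',\psi')$ equals $\scal{\varphi,\varphi'}_{L^2_\BC(\R)}\,\scal{\psi,\psi'}_{L^2_\BC(\R)}$, which is by definition the inner product of $\varphi\otimes\psi$ and $\varphi'\otimes\psi'$ in the Hilbert tensor product. By bicomplex-linearity and density of finite sums of pure tensors, $\V^{\sigma,\nu,\mu}_{\R,\BC}$ then extends canonically to an isometric embedding of $L^2_\BC(\R)\otimes L^2_\BC(\R)$ into $L^2_\BC(\BC)$.

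For surjectivity, I would fix the orthogonal basis $\{h^\sigma_m\}_{m\geq 0}$ of $L^2_\BC(\R)$, so that $\{h^\sigma_m\otimes h^\sigma_n\}_{m,n\geq 0}$ is an orthogonal basis of $L^2_\BC(\R)\otimes L^2_\BC(\R)$. The explicit formula $\V^{\sigma,\nu,\mu}_{\R,\BC}(h^\sigma_m,h^\sigma_n) = (-1)^{n}2^{m+n}\left(\frac{\sigma}{\pi}\right)^{1/2}\psi^{\sigma,\nu,\mu}_{m,n}$ derived in the previous theorem, together with Proposition \ref{PropnBCBarg} (which says that $\{\psi^{\sigma,\nu,\mu}_{m,n}\}_m$ is an orthogonal basis of $\Mbcnumun$ for each fixed $n$) and Lemma \ref{lemDecBicn} (pairwise orthogonality of the $\Mbcnumun$'s), shows that $\{\psi^{\sigma,\nu,\mu}_{m,n}\}_{m,n\geq 0}$ is a complete orthogonal system in $\MbcnumuT = \bigoplus_{n\geq 0}\Mbcnumun$. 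Since the closed image of the above isometric embedding contains this complete system, it must equal $\MbcnumuT$.

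The main obstacle I anticipate is not the combinatorics, which reduces to lining up bases once the preceding results are in hand, but rather the bicomplex bookkeeping. The inner products in \eqref{BcSP} are bicomplex-valued, so one must verify that the Hilbert tensor product $L^2_\BC(\R)\otimes L^2_\BC(\R)$ is well defined as a bicomplex Hilbert module in a way that makes Moyal's identity \eqref{MoyalL3} express preservation of the genuine \emph{bicomplex} inner product (not merely its idempotent components), and that the direct-sum bicomplex Hilbert structure on $\MbcnumuT$ coming from Lemma \ref{lemDecBicn} is compatible with this. Once these foundational identifications are pinned down, the three-step outline above is rigorous.
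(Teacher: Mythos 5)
Your proposal is correct and follows essentially the same route as the paper: Moyal's identity \eqref{MoyalL3} for norm preservation, the explicit action on the Hermite functions $h^\sigma_m,h^\sigma_n$ to see that the image contains the complete orthogonal system $\{\psi^{\sigma,\nu,\mu}_{m,n}\}_{m,n}$, and then Proposition \ref{PropnBCBarg} together with the pairwise orthogonality in Lemma \ref{lemDecBicn} to identify the range with $\bigoplus_{n}\Mbcnumun$. If anything, you are more careful than the paper's one-line argument, since you make explicit the Hilbert tensor-product reading of the domain that is needed for ``isometry from $L^2_{\BC}(\R)\times L^2_{\BC}(\R)$'' to be a precise statement.
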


\begin{proof}
By  Lemma \ref{lemDecBicn}, it is clear that the bicomplex Hermite functions $\psi^{\sigma,\nu,\mu}_{m,n}$ for varying $m$ and $n$ form an orthogonal basis of the range of $L^2_{\BC}(\R)\times L^2_{\BC}(\R)$ by $ \V^{\sigma,\nu,\mu}_{\R,\BC}$.
\end{proof}

\begin{rem}
The space $\Mbcnumuo$ is strictely contained in $ L^2_{\BC}(\BC)$ since the functions
$$ \varphi^{\sigma,\nu,\mu}_{m,n}(Z)
=
\left(\sigma^{m} z_{2_\mu}^m \eu +  \sigma^{n} z_{1_\nu}^n \es \right)  e^{ -\frac{\sigma}{2} |Z^e_{\nu,\mu}|^2  }
$$
 belong to $ L^2_{\BC}(\BC)$ whenever $m\ne n$ but do not belongs to $\MbcnumuT$.
\end{rem}

\section{Bidimensional bicomplex Fourier--Wigner transform.}
 %%%%%%%%%%%%%%%%%%%%%%%%%%%%%%%%%%%%%%%%%%%%%%%%%%%%%%%%%%%%%%%%%%%%%%%%%%%%%%%%%%%%%%%%%%%%%%%%%%%%%%%%%%%%%%%%%%%%%%%%%%%%%%%%%%%%

In this section, we consider the natural extention to the bicomplex Hilbert space  $L^2_{\BC}(\R^2)$ of the operators defined on $L^2_{\C}(\R^2)$ by
$$M^{\nu,\sigma}_{X,Y} g(U) = e^{\nu \sigma \scal{U-\frac{X}2,Y}}g(U) \quad \mbox{and} \quad T_X g(U) := g(U - X)$$
where $X,Y\in\R^2$. Namely, we define
$$\widetilde{M^{\sigma,\nu,\mu}_{X,Y}}  \varphi = M^{\sigma,\nu}_{X,Y} \varphi^+ \eu +  M^{\sigma,\mu}_{X,Y} \varphi^- \es$$
and 
$$\widetilde{T_X} \psi(U) := \psi^+(U - X)\eu +  \psi^-(U - X)\es
 $$
for given $ \varphi = \varphi^+  \eu +  \varphi^- \es $ and $ \psi = \psi^+  \eu +  \psi^- \es $ in $L^2_{\BC}(\R^2)$ with $\varphi^+, \varphi^-, \psi^+ , \psi^- \in L^2_{\C}(\R^2)$.

\begin{defn}
	We call bidimensional bicomplex Fourier--Wigner transform that we denote by $\V^{\sigma,\nu,\mu}_{\R^2,\BC}$ the one associated to
	the "bicomplex time--frequency shift" operator
	$\widetilde{M^{\sigma,\nu,\mu}_{X,-Y}}\widetilde{  T_X}$ on $L^2_{\BC}(\R^2)\times L^2_{\BC}(\R^2)$ 
	and given explicitely by
	\begin{equation}
	\V^{\sigma,\nu,\mu}_{\R^2,\BC}(\varphi,\psi) (Z) = \left(\frac{1}{2\pi}\right)^{\frac{1}{2}} \int_{\R^2} e^{\sigma(\nu\eu+\mu\es) \scal{U-\frac{X}2,Y}_{\R^2} } \varphi(U) \psi^{*}(U-X) d\lambda(U) \label{FWdecomp2}
	\end{equation}
	with $Z=z_1+jz_2\in \BC$, $z_\ell=x_\ell+iy_\ell$, $X=(x_1,x_2)$ and $Y=(y_1,y_2)$.
\end{defn} 

By proceeding in a similar way as in the previous section, we can prove the following (we omit the proof).

\begin{lem}
	We have
	 	\begin{equation}\label{BidV1}
	\V^{\sigma,\nu,\mu}_{\R^2,\BC}(\varphi,\psi) (Z) = \V^{\sigma,\nu}_{\R^2,\C^2}(\varphi^+,\psi^+) (X,Y) \eu + \V^{\sigma,\mu}_{\R^2,\C^2}(\varphi^-,\psi^-) (X,Y) \es
	\end{equation}
	as well as
		\begin{eqnarray}\label{Moyal4}
		\scal{\V^{\sigma,\nu,\mu}_{\R^2,\BC}(\varphi_1,\psi_1) , \V^{\sigma,\nu,\mu}_{\R^2,\BC}(\varphi_2,\psi_2)}_{L^2_{\BC}( \BC)}
		=
		\scal{\varphi_1, \varphi_2 }_{L^2_{\BC}( \C)}    \scal{\psi_1,\psi_2 }_{L^{2}_{\BC}( \C)} .
		\end{eqnarray}
\end{lem}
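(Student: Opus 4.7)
The plan is to mimic the strategy used in the proof of Lemma 3.2 (the unidimensional case) and reduce the bidimensional bicomplex Fourier--Wigner transform to two classical scalar rescaled Fourier--Wigner transforms on $\R^2$ via the idempotent decomposition $1 = \eu + \es$, together with the properties $\eu^2=\eu$, $\es^2=\es$, $\eu\es=0$.

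First, I would establish the splitting formula \eqref{BidV1}. Starting from the definition \eqref{FWdecomp2}, I would write $\varphi = \varphi^+ \eu + \varphi^- \es$ and, since $\psi^{*}=(\psi^+ \eu + \psi^- \es)^* = \overline{\psi^+}\eu +\overline{\psi^-}\es$, expand the integrand componentwise. The exponential factor splits as
\begin{equation*}
e^{\sigma(\nu\eu+\mu\es)\scal{U-\frac{X}{2},Y}_{\R^2}} = e^{\sigma \nu \scal{U-\frac{X}{2},Y}_{\R^2}}\eu + e^{\sigma \mu \scal{U-\frac{X}{2},Y}_{\R^2}}\es,
\end{equation*}
because $\eu$ and $\es$ are orthogonal idempotents. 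Plugging into the integral and collecting the $\eu$ and $\es$ coefficients yields exactly the scalar rescaled Fourier--Wigner transforms $\V^{\sigma,\nu}_{\R^2,\C^2}(\varphi^+,\psi^+)(X,Y)$ and $\V^{\sigma,\mu}_{\R^2,\C^2}(\varphi^-,\psi^-)(X,Y)$, after recognizing the integrals as their bidimensional analogs (with the $(2\pi)^{-1/2}$ normalization matching the $d=2$ version of \eqref{FWTsig}).

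Second, for the Moyal identity \eqref{Moyal4}, I would start from the splitting formula and compute the inner product in $L^2_{\BC}(\BC)$. Using that the bicomplex scalar product decomposes as $\scal{f,g}_{L^2_{\BC}(\BC)}=\scal{f^+,g^+}_{L^2_{\C}}\eu + \scal{f^-,g^-}_{L^2_{\C}}\es$ (once one identifies $L^2_{\BC}(\BC)$ with $L^2_{\C}(\C_\nu\times\C_\mu)\eu + L^2_{\C}(\C_\nu\times\C_\mu)\es$ via the idempotent decomposition, analogous to what is used in the proof of Lemma~3.2), the inner product reduces to
\begin{equation*}
\scal{\V^{\sigma,\nu}_{\R^2,\C^2}(\varphi^+_1,\psi^+_1),\V^{\sigma,\nu}_{\R^2,\C^2}(\varphi^+_2,\psi^+_2)}_{L^2_{\C}(\C^2)}\eu + \scal{\V^{\sigma,\mu}_{\R^2,\C^2}(\varphi^-_1,\psi^-_1),\V^{\sigma,\mu}_{\R^2,\C^2}(\varphi^-_2,\psi^-_2)}_{L^2_{\C}(\C^2)}\es.
\end{equation*}

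Third, I would apply the classical rescaled Moyal's identity \eqref{Moyal} (for $d=2$) to each of these two scalar inner products, obtaining products $\scal{\varphi^{\pm}_1,\varphi^{\pm}_2}_{L^2_{\C}(\R^2)}\scal{\psi^{\pm}_1,\psi^{\pm}_2}_{L^2_{\C}(\R^2)}$. Finally, using the idempotent identity
\begin{equation*}
(a^+\eu + a^-\es)(b^+\eu + b^-\es) = a^+b^+\eu + a^-b^-\es
\end{equation*}
backwards, I recombine the two pieces into the product $\scal{\varphi_1,\varphi_2}_{L^2_{\BC}(\R^2)}\scal{\psi_1,\psi_2}_{L^2_{\BC}(\R^2)}$ (the statement's ``$\C$'' in the inner product spaces should presumably be ``$\R^2$''), finishing the proof.

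The only delicate point, and therefore the main obstacle, is just bookkeeping: one must check that the normalization constant $(2\pi)^{-1/2}$ in \eqref{FWdecomp2} correctly matches $(\sigma/(2\pi))^{d/2}$ with $d=2$ absorbed into the scalar transforms $\V^{\sigma,\nu}_{\R^2,\C^2}$, and that the inner-product constants in $L^2_{\BC}(\BC)$ versus $L^2_{\C}(\C^2)$ cancel correctly (as in the constant $\left(\frac{\sigma}{2\pi}\right)\int_\C e^{-\frac{\sigma}{2}|\xi|^2}d\lambda(\xi)=1$ manipulation in Lemma~3.2). Beyond this careful tracking of constants, both assertions follow formally from the scalar case by the idempotent splitting, which is why the authors omit the detailed proof.
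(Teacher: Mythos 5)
Your proposal is correct and follows exactly the strategy the paper indicates (the authors omit this proof, saying only that it proceeds as in Section 3): idempotent splitting of the exponential, the $*$-conjugate and the integrand to get \eqref{BidV1}, then the componentwise bicomplex inner product plus the classical Moyal identity \eqref{Moyal} for $d=2$ to get \eqref{Moyal4}. You also rightly flag that the ``$L^2_{\BC}(\C)$'' in the statement should read $L^2_{\BC}(\R^2)$.
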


\begin{prop} \label{MainThm3a}
	The bicomplex Fourier--Wigner transform $\V^{\sigma,\nu,\mu}_{\R^2,\BC}$ defines a surjection from $L^2_{\BC}(\R^2)\times L^2_{\BC}(\R^2)$ onto $L^2_{\BC}(\BC)$.
\end{prop}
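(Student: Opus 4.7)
The plan is to reduce the bicomplex surjectivity assertion to its scalar complex counterpart by exploiting the splitting identity \eqref{BidV1} together with the idempotent decomposition of $L^2_\BC(\BC)$. Identifying $\BC$ with $\C^2 = \R^2\times\R^2$ through $Z=(X,Y)$, any $F\in L^2_\BC(\BC)$ admits a unique decomposition
$$F(Z) = F^+(X,Y)\eu + F^-(X,Y)\es, \qquad F^\pm \in L^2_\C(\C^2).$$

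The next step is to invoke the classical fact that the scalar Fourier--Wigner transform $\V^{\sigma,\tau}_{\R^2,\C^2}$ maps $L^2_\C(\R^2)\times L^2_\C(\R^2)$ onto $L^2_\C(\C^2)$ for each $\tau \in \{\nu,\mu\}$, which will follow from the Moyal identity \eqref{Moyal} combined with the orthonormal basis property recalled in Section 2: if $\{\varphi_k\}$ is an ONB of $L^2_\C(\R^2)$, then $\{\V^{\sigma,\tau}(\varphi_j,\varphi_k)\}_{j,k\in\N}$ is an ONB of $L^2_\C(\C^2)$. Selecting pairs $(\varphi^+,\psi^+), (\varphi^-,\psi^-) \in L^2_\C(\R^2)\times L^2_\C(\R^2)$ with $\V^{\sigma,\nu}(\varphi^+,\psi^+) = F^+$ and $\V^{\sigma,\mu}(\varphi^-,\psi^-) = F^-$, we assemble the bicomplex functions $\varphi := \varphi^+\eu + \varphi^-\es$ and $\psi := \psi^+\eu + \psi^-\es$ in $L^2_\BC(\R^2)$. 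Plugging these into \eqref{BidV1} at once yields
$$\V^{\sigma,\nu,\mu}_{\R^2,\BC}(\varphi,\psi)(Z) = F^+(X,Y)\eu + F^-(X,Y)\es = F(Z),$$
which settles the claim.

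The principal obstacle is the classical surjectivity step: Moyal's formula and the orthonormal basis statement yield the density of the range in $L^2_\C(\C^2)$, but upgrading density to the existence of a single pair $(\varphi^\pm,\psi^\pm)$ whose transform equals a prescribed $F^\pm$ is the genuinely delicate ingredient. This is customarily obtained via the Plancherel theorem for the Schr\"odinger representation of the Heisenberg group --- equivalently, via the Weyl correspondence identifying $L^2_\C(\C^2)$ with the Hilbert--Schmidt operators on $L^2_\C(\R^2)$ --- together with a Schmidt/rank-reduction argument to absorb a general Hilbert--Schmidt kernel into a single pair of $L^2$ vectors. Once this scalar surjection is in hand, everything that remains is a purely bookkeeping assembly along the idempotent components via \eqref{BidV1}.
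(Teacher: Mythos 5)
Your reduction coincides with the paper's own proof: split $F\in L^2_{\BC}(\BC)$ into idempotent components $F^{\pm}\in L^2_{\C}(\C^2)$, solve $\V^{\sigma,\nu}_{\R^2,\C^2}(\varphi^+,\psi^+)=F^+$ and $\V^{\sigma,\mu}_{\R^2,\C^2}(\varphi^-,\psi^-)=F^-$ separately, and reassemble via \eqref{BidV1}. That bookkeeping is fine. The gap is in the scalar step, which you correctly single out as the delicate ingredient but then do not actually supply: the classical Fourier--Wigner transform, as a bilinear map, is \emph{not} surjective from $L^2_{\C}(\R^2)\times L^2_{\C}(\R^2)$ onto $L^2_{\C}(\C^2)$. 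Under the very Weyl/Plancherel correspondence you invoke, $\V^{\sigma,\tau}(f,g)$ is the symbol of the rank-one operator $h\mapsto\scal{h,g}f$, so the range of the bilinear map is exactly the set of symbols of operators of rank at most one. A generic Hilbert--Schmidt operator has infinite rank; its Schmidt decomposition is an infinite \emph{sum} of rank-one terms, and there is no ``rank-reduction argument'' collapsing such a sum into a single pair $(f,g)$. (The paper's proof asserts the same scalar surjectivity without justification, so it has the same defect; your version at least makes the missing step visible.)

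Concretely, take orthonormal $e_1,e_2\in L^2_{\C}(\R^2)$ and set $F^+=\V^{\sigma,\nu}(e_1,e_1)+\V^{\sigma,\nu}(e_2,e_2)$. If $F^+=\V^{\sigma,\nu}(f,g)$ for some pair, then Moyal's identity \eqref{Moyal} yields
\begin{equation*}
\scal{f,e_1}\scal{e_1,g}=1,\qquad \scal{f,e_2}\scal{e_2,g}=1,\qquad \scal{f,e_1}\scal{e_2,g}=0 .
\end{equation*}
The first equation forces $\scal{f,e_1}\neq0$, the third then forces $\scal{e_2,g}=0$, contradicting the second. Hence $F=F^+\eu+F^-\es$ lies outside the range, and the surjectivity claim fails as stated. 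What Moyal's identity genuinely delivers --- and what the rest of the section actually uses --- is that the \emph{closed linear span} of the range is all of $L^2_{\BC}(\BC)$, equivalently that the family $\V^{\sigma,\nu,\mu}_{\R^2,\BC}(\phi_m,\phi_n)$ is an orthonormal basis as in Proposition \ref{MainThm3}. The statement should be weakened to density of the linear span (or the transform extended to finite sums of pairs) for either your argument or the paper's to go through.
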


\begin{proof}
	Let $F\in L^2_{\BC}(\BC)$. Then, we can rewrite $F$ as
	 $F=F^+\eu + F^-\es$ for certain $F^\pm\in L^2_{\C}(\C^2)$.
	 	By the surjectivity of $\V^{\sigma,\nu}_{\R^2,\C^2}$ and $\V^{\sigma,\mu}_{\R^2,\C^2}$ from $L^2_{\C}(\R^2)\times L^2_{\C}(\R^2)$ onto $L^2_{\C}(\C^2)$, we can exhibit
	 $\varphi^\pm ,\psi^\pm \in L^2_{\C}(\R^2)$ such that
	$$F^+(Z) = \V^{\sigma,\nu}_{\R^2,\C^2}(\varphi^+ ,\psi^+) (X,Y)$$
	and
	$$ F^-(Z) = \V^{\sigma,\mu}_{\R^2,\C^2}(\varphi^- ,\psi^-) (X,Y).$$
	Accordingly,
	$$
	F(Z) 	= \V^{\sigma,\nu}_{\R^2,\C^2}(\varphi^+ ,\psi^+) (X,Y) \eu +
	\V^{\sigma,\nu}_{\R^2,\C^2}(\varphi^- ,\psi^-) (X,Y) \es.
	$$
In virtue of \eqref{BidV1} and setting  $\varphi := \varphi^+\eu+\varphi^-\es$ and $\psi:=\psi^+\eu+\psi^-\es$, we get
$$ F(Z)	 = \V^{\sigma,\nu,\mu}_{\R^2,\BC}(\varphi^+\eu+\varphi^-\es,\psi^+\eu+\psi^-\es) (Z) = \V^{\sigma,\nu,\mu}_{\R^2,\BC}(\varphi,\psi) (Z).$$
Notice finally that $\varphi,\psi\in L^2_{\BC}(\R^2)$ since $\varphi^\pm ,\psi^\pm \in L^2_{\C}(\R^2)$. 	
\end{proof}

In the sequel, we provide a nontrivial basis for the bicomplex Hilbert space $L^2_{\BC}(\BC)$. In fact, the Moyal's identity \eqref{Moyal4}
is an effective tool for constructing orthogonal bases for $L^2_{\BC}( \BC )$ from those of $L^2_{\BC}(\C)$.
Namely, we assert

\begin{prop} \label{MainThm3}
	Let $(\phi_{n})_{n}$ be a system in $L^2_{\BC}(\R^2)$ such that $ \phi_n= \phi_n^+ \eu  + \phi_n^- \es $ with $\phi_n^+, \phi_n^- \in L^2_{\C}(\C)$. If $(\phi_n^+)_n$ and $(\phi_n^-)_n$ are orthonormal bases of $L^2_{\C}(\C)$, then the family of functions
	$$ \phi_{m,n}:= \V^{\sigma,\nu,\mu}_{\R^2,\BC} (\phi_{m},\phi_{n} ); \, m,n=0,1,2, \cdots , $$ is an orthonormal basis of $L^{2}_{\BC}( \BC)$.
\end{prop}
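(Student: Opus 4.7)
The plan is to deduce both orthonormality and totality of the family $(\phi_{m,n})_{m,n}$ from the two structural results already at hand: the Moyal identity \eqref{Moyal4} and the idempotent splitting \eqref{BidV1}. Throughout, I read $\phi_n^{\pm}\in L^2_{\C}(\R^2)$ (consistent with $\phi_n\in L^2_{\BC}(\R^2)$).

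First, I would handle orthonormality. Since the bicomplex scalar product splits as
\begin{equation*}
\scal{\phi_m,\phi_r}_{L^2_{\BC}(\R^2)} = \scal{\phi_m^+,\phi_r^+}_{L^2_{\C}(\R^2)}\eu + \scal{\phi_m^-,\phi_r^-}_{L^2_{\C}(\R^2)}\es,
\end{equation*}
and both $(\phi_n^+)_n$ and $(\phi_n^-)_n$ are assumed to be orthonormal in $L^2_{\C}(\R^2)$, each summand collapses to $\delta_{m,r}$. Using $\eu+\es=1$, this gives $\scal{\phi_m,\phi_r}_{L^2_{\BC}(\R^2)}=\delta_{m,r}$, so the system $(\phi_n)_n$ is orthonormal in $L^2_{\BC}(\R^2)$. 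Applying the Moyal identity \eqref{Moyal4} to the pair $(\phi_m,\phi_n)$ and $(\phi_r,\phi_s)$ then yields
\begin{equation*}
\scal{\phi_{m,n},\phi_{r,s}}_{L^2_{\BC}(\BC)} = \scal{\phi_m,\phi_r}_{L^2_{\BC}(\R^2)}\scal{\phi_s,\phi_n}_{L^2_{\BC}(\R^2)} = \delta_{m,r}\delta_{n,s}.
\end{equation*}

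Second, I would establish completeness by lifting the classical scalar fact recalled in Section~2: if $(\varphi_k)_k$ is an orthonormal basis of $L^2_{\C}(\R^2)$, then $\{\V^{\sigma,\tau}_{\R^2,\C^2}(\varphi_j,\varphi_k)\}_{j,k}$ is an orthonormal basis of $L^2_{\C}(\C^2)$ (applied with $\tau=\nu$ and with $\tau=\mu$). Given $F\in L^2_{\BC}(\BC)$, decompose $F=F^+\eu+F^-\es$ with $F^{\pm}\in L^2_{\C}(\C^2)$ and expand
\begin{equation*}
F^+ = \sum_{m,n} a^+_{m,n}\,\V^{\sigma,\nu}_{\R^2,\C^2}(\phi_m^+,\phi_n^+), \qquad F^- = \sum_{m,n} a^-_{m,n}\,\V^{\sigma,\mu}_{\R^2,\C^2}(\phi_m^-,\phi_n^-).
\end{equation*}
Setting $a_{m,n}:=a^+_{m,n}\eu+a^-_{m,n}\es$ and invoking the splitting \eqref{BidV1} term by term gives $F=\sum_{m,n}a_{m,n}\phi_{m,n}$, which is the desired expansion.

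The main (and only) obstacle is to justify the passage from coefficient-wise expansions in the two idempotent components to a genuinely convergent series in $L^2_{\BC}(\BC)$. This is handled by noting that $\|F\|^2_{L^2_{\BC}(\BC)}$ is controlled by $\|F^+\|^2_{L^2_{\C}(\C^2)}+\|F^-\|^2_{L^2_{\C}(\C^2)}$, so Parseval's equality in each complex component transfers verbatim to the bicomplex side; uniqueness of the coefficients $a_{m,n}$ is guaranteed by the orthonormality established in the first step. This combination of steps completes the proof.
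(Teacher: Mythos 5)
Your proof is correct and follows essentially the same route as the paper: orthonormality of $(\phi_{m,n})$ via the idempotent splitting of the scalar product combined with the Moyal identity \eqref{Moyal4}, and completeness via the decomposition \eqref{BidV1} together with the classical fact that $\V^{\sigma,\tau}_{\R^2,\C^2}$ carries orthonormal bases of $L^2_{\C}(\R^2)$ to orthonormal bases of $L^2_{\C}(\C^2)$. If anything, your completeness step (explicitly expanding $F^{\pm}$ and recombining the coefficients as $a_{m,n}=a^+_{m,n}\eu+a^-_{m,n}\es$) is spelled out in more detail than the paper's, which merely asserts the equivalence.
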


\begin{proof}
	Under the assumption that $(\phi_n^+)_n$ and $(\phi_n^-)_n$ are orthogonal in $L^2_{\C}(\C)$, i.e., $$\scal{\phi_n^+,\phi_{'}^+}_{L^2_{\C}(\C)}=\scal{\phi_n^-,\phi_{n'}^-}_{L^2_{\C}(\C)}=0; \quad n\ne n',$$
	it follows
	$$ \scal{\phi_{n},\phi_{n'}}_{L^2_{\BC}(\C)}= \scal{\phi_n^+,\phi_{'}^+}_{L^2_{\C}(\C)} \eu +\scal{\phi_n^-,\phi_{n'}^-}_{L^2_{\C}(\C)} \es = 0$$
	for $n\ne n'$, and therefore $(\phi_{n})_{n}$ is orthogonal in $L^2_{\BC}(\C)$. Thus $$\scal{\phi_{m},\phi_{m'}}_{L^2_{\BC}(\C)}\scal{\phi_{n},\phi_{n'}}_{L^2_{\BC}(\C)}=0 ; \, \mbox{ for } \, (m,n)\ne (m',n').$$
	Subsequently, the family $(\V^{\sigma,\nu,\mu}_{\R^2,\BC} (\phi_{m},\phi_{n} ))_n$ is orthogonal in $L^2_{\BC}(\BC)$ by means of \eqref{Moyal4}.
	Moreover, the corresponding bicomplex norm 	is given by
	\begin{eqnarray}
	 \norm{\phi_{m,n} }_{L^2_{\BC}( \BC)}^2
& =&
	\left|
	\scal{\V^{\sigma,\nu,\mu}_{\R^2,\BC}(\phi_m,\phi_n), \V^{\sigma,\nu,\mu}_{\R^2,\BC}(\phi_m,\phi_n)}_{L^2_{\BC}(\BC)} \right|  \nonumber
\\	&=&\left|  \scal{\phi_m,\phi_m}_{L^2_{\BC}(\C)} \scal{\phi_n,\phi_n}_{L^2_{\BC}(\C)} \right| \nonumber\\
	&=&  \left| \scal{\phi_m^+, \phi_n^+ }_{L^2_{\C}(\C)} \scal{\phi_m^+,\phi_n^+ }_{L^2_{\C}(\C)} \eu
	+ \scal{\phi_m^-,\phi_n^- }_{L^2_{\C}( \C)} \scal{\phi_m^- ,\phi_n^- }_{L^2_{\C}( \C)}\right|  \es  \nonumber 
\\&=&
 \frac1{2}\left(\norm{\phi_m^+}_{L^2_{\C}( \C )}^2 \norm{\phi_n^+}_{L^2_{\C}(\C)}^2 + \norm{\phi_m^-}_{L^2_{\C}( \C)}^2\norm{\phi_n^-}_{L^2_{\C}( \C)}^2\right). \nonumber
	\end{eqnarray}
	so that $ 	\norm{\phi_{m,n} }_{L^2_{\BC}( \BC)}^2=1$ for $(\phi_n^+)_n$ and $(\phi_n^-)_n$ being orthonormal in $L^2_{\C}(\C)$.
The fact that $(\phi_{m,n})_{m,n}$ is a basis of $L^2_{\BC}( \BC) = L^2_{\C}(\C^2)\eu + L^2_{\C}(\C^?2)\es $ follows easily since this is equivalent to $(\V^{\sigma,\nu}_{\R^2,\C^2}(\phi_m^+,\phi_n^+))_{m,n}$ and $(\V^{\sigma,\mu}_{\R^2,\C^2}(\phi_m^-,\phi_n^-))_{m,n}$ be bases of $L^2_{\C}(\C^2)$ in view of the idempotent decomposition \eqref{BidV1}. This holds true since $(\phi_n^+)_n$ and $(\phi_n^-)_n$ are bases of $L^2_{\C}(\C)$ and $\V^{\sigma,\tau}_{\R^2,\C^2}$ is the standard Fourier--Wigner transform mapping orthonormal bases of $L^2_{\C}(\C)$ to orthonormal bases of $L^2_{\C}(\C^2)$. This completes the proof.
\end{proof}

\begin{cor}\label{corBasisLT}
	The functions
	$$h_{m,n,m',n'}^\sigma(Z) := \V^{\sigma,\nu,\mu}_{\R^2,\BC} ( h_{m,n}^\sigma, h_{m',n'}^\sigma )(Z) $$
	for varying $m,n,m',n'=0,1,2, \cdots , $
		form an orthogonal basis of $L^2_{\BC}( \BC)$.
\end{cor}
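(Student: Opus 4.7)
The plan is to derive the corollary as a direct specialization of Proposition \ref{MainThm3}, taking the (normalized) polyanalytic complex Hermite functions $h_{m,n}^\sigma$ as the generating family. The only analytic input beyond what is already established in the excerpt is the classical fact (see e.g.\ \cite{Vasilevski2000, Abreu2010}) that, after the standard normalization, the system $\{h_{m,n}^\sigma(z,\bz) : m,n \geq 0\}$ from \eqref{chp} constitutes an orthogonal basis of $L^2_{\C}(\C)$; the squared norms $\|h_{m,n}^\sigma\|^2$ are explicitly known in terms of $\sigma$, $m$, $n$.

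I would then embed the complex Hermite system into $L^2_{\BC}(\R^2)\cong L^2_{\BC}(\C)$ via the diagonal inclusion $f\mapsto f\eu+f\es$ (legitimate since $\eu+\es=1$), producing bicomplex-valued functions $\phi_{(m,n)}:=\widetilde{h}_{m,n}^\sigma\eu+\widetilde{h}_{m,n}^\sigma\es$ whose two idempotent components both coincide with the normalized $\widetilde{h}_{m,n}^\sigma$. Reindexing the single index $n$ in Proposition \ref{MainThm3} as the pair $(m,n)$---the proof of that proposition is clearly insensitive to the choice of countable indexing set---the hypotheses are met: both component families are the same orthonormal polyanalytic Hermite basis of $L^2_{\C}(\C)$.

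Proposition \ref{MainThm3} then yields that the family $\V^{\sigma,\nu,\mu}_{\R^2,\BC}(\phi_{(m,n)},\phi_{(m',n')})$ is an orthonormal basis of $L^2_{\BC}(\BC)$. Identifying the complex-valued $h_{m,n}^\sigma$ with its bicomplex avatar $h_{m,n}^\sigma\eu+h_{m,n}^\sigma\es$ (which is the implicit reading required to make $\V^{\sigma,\nu,\mu}_{\R^2,\BC}(h_{m,n}^\sigma,h_{m',n'}^\sigma)$ meaningful) and applying the splitting \eqref{BidV1}, one obtains that this family equals $h_{m,n,m',n'}^\sigma$ up to the nonzero scalar normalization constants introduced in the first step; absorbing those constants preserves orthogonality and completeness, which gives the claimed orthogonal basis. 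There is no substantial obstacle in the argument: the corollary is essentially a bookkeeping application of the preceding proposition, and the only nontrivial ingredient is the recollection that the complex polyanalytic Hermite functions form an orthogonal basis of $L^2_{\C}(\C)$.
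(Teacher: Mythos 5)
Your proposal is correct and follows essentially the same route as the paper: the paper's own proof is the one-line observation that $h_{m,n}^\sigma = h_{m,n}^\sigma \eu + h_{m,n}^\sigma \es$ is an orthogonal basis of $L^2_{\BC}(\C)$, so that Proposition \ref{MainThm3} applies after reindexing by pairs. Your additional care with the normalization constants (Proposition \ref{MainThm3} is phrased for orthonormal families, while the corollary asserts only an orthogonal basis) is a small bookkeeping point the paper passes over silently, but it does not change the argument.
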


\begin{proof}
	This is an immediate consequence of Proposition \ref{MainThm3} since the univariate complex Hermite functions $h_{m,n}^\sigma  (\xi,\overline{\xi} ) = h_{m,n}^\sigma  (\xi,\overline{\xi} )\eu + h_{m,n}^\sigma  (\xi,\overline{\xi} ) \es$
    is an orthogonal basis of $L^2_{\BC}( \C)$.
\end{proof}

    \begin{rem}
 The polynomials associated to $h_{m,n,m',n'}^\sigma$ form a new class of bivariate complex Hermite polynomials which are not a tensor product of four one--dimensional copies of the classical Hermite functions $h_n^\sigma$, nor a tensor product of two copies of the complex Hermite functions $h_{m,n}^\sigma$.
  \end{rem}

    \begin{rem}
    	For the special window function $\psi_0(U) := h_{0}^\sigma(u) h_{0}^\sigma(v)=h_{0,0}^\sigma(U_\tau) $ with $U_\tau=u+\tau v$ and $U^2=u^2+v^2$ for $U=(u,v)$, the transfrom $\varphi \longmapsto 	\V^{\sigma,\nu,\mu}_{\R^2,\BC}(\varphi,\psi_0)$ on
    	is closely connected to the bidimensional Segal--Bargmann transform.
    	Indeed, we have
    	\begin{align*}
    	\V^{\sigma,\nu,\mu}_{\R^2,\BC}(\varphi,\psi_0) (Z)
    	&= c \int_{\R^2} e^{\frac{\sigma}2 (\nu\eu+\mu\es) \left( 2UY - XY\right)  - \frac{\sigma}2\left( U-X\right)^2} \varphi(U)d\lambda(U)\\
    		&= c e^{-\frac{\sigma}4 \left(X^2+Y^2\right)}
    		e^{\frac{\sigma}4 [X+ (\nu\eu+\mu\es) Y] ^2}
    		  \int_{\R^2} e^{- \frac{\sigma}2 \left( U- [X+ (\nu\eu+\mu\es) Y]\right)^2}   \varphi(U)d\lambda(U)\\
    & c = e^{-\frac{\sigma}4|S_{\nu\eu+\mu\es}|^2 } e^{\frac{\sigma}4 (S_{\nu\eu+\mu\es})^2 }  \int_{\R^2} e^{- \frac{\sigma}2 \left( U- S_{\nu\eu+\mu\es}\right)^2}  \varphi(U)d\lambda(U),
    	\end{align*}
    	where $c= \sqrt{2\pi}^{-1/2}$ and $S_{\nu\eu+\mu\es}=(z,w)= X+ (\nu\eu+\mu\es) Y \in \C_{\nu\eu+\mu\es}^2$ with $X=(x_1,x_2)$, $Y=(y_1,y_2)$, $Z=z_1+jz_2\in \BC$ and  $z_\ell=x_\ell+iy_\ell$; $\ell=1,2$.
\end{rem}

\section{Concluding remarks}

We have considered two bicomplex analogs of the classical (rescaled) Fourier--Wigner transform. This follows using the idempotent decomposition of bicomplex numbers. The standard phase (or time--frequency) space $\R\times\R$ is replaced here by the bicomplex $(\R\times\R) \eu + (\R\times\R)\es$. Thus the concrete description of analytic properties of these transforms are obtained. It gives rise to special generalization of the bicomplex Bargmann space studied in \cite{Zine2018}. One of the advantage of this setting is to work simultaneously with two models of the polyanalytic Bargmann space $\mathcal{F}^{2,\sigma}_n(\C_\tau)$, the first one %$\mathcal{F}^{2,\sigma}_n(\C_\nu)$
is focused on $e_+$ and the other %$\mathcal{F}^{2,\sigma}_n(\C_\mu)$
on $e_-$. This is the case of the first transform and the obtained functional spaces are particular subclasses of the so--called $(n^*,1^{-},1^\dagger)$--$\BC$--polyanalytic functions of first kind. More generally, a bicomplex--valued function $f$ on $\BC$ is said to be $(n^*,m^{-},k^\dagger)$--$\BC$--polyanalytic
if it satisfies  the system of first order differential equations
\begin{eqnarray}
\frac{\partial^{n+1} f}{\partial (Z^*)^{n+1}} =
\frac{\partial^{m+1} f}{\partial \overline{Z}^{m+1}}  =
 \frac{\partial^{k+1} f}{\partial (Z^\dagger)^{k+1}} = 0.
\end{eqnarray}
These spaces (and others) will be the subject of a forthcoming paper.

					As signaled in Section 3, the range of the first transform is strictly contained in $L^2_{\BC}(\BC)$. This is not the case for the second transform studied in Section 4. In fact, we obtain a Hilbertian orthogonal decomposition of $L^2_{\BC}(\BC)$,
					$$L^2_{\BC}(\BC) := \bigoplus_{m,n=0}^{+\infty} \Mbcnumumn  ,$$
	 in terms of the ranges $\Mbcnumumn : = \mathcal{S}^{\sigma,\nu,\mu}_{m,n}(L^2_{\BC}(\R^2))$ of $L^2_{\BC}(\R^2)$ by the transforms $ \mathcal{S}^{\sigma,\nu,\mu}_{m,n} = \V^{\sigma,\nu,\mu}_{\R^2,\BC}( \cdot , h_{m,n}^\sigma)$ (this is contained in Corollary \ref{corBasisLT}).
It will be of interest to provide a concrete description of the functions in Corollary \ref{corBasisLT}. This will be treated in some detail in a forthcoming paper from a different point of view.\\

\noindent{\bf Acknowledgement:} 
The assistance of the members of the "Ahmed Intissar's seminar on Analysis, partial differential equations and spectral geometry" is gratefully acknowledged.

%%%%%%%%%%%%%%%%%%%%%%%%%%%%%%%%%%%%%%%%%%%%%%%%%%%%%%%%%%%%%%%%%%%%%%%%%%%%%%%%%%%%%%%%%%%%%%%%%%%%%%%%%%%%%%%%%%%%%%%%%%%%%%%%%%%%%

\end{document}